\newtheorem{theorem}{Theorem}[section]
\newtheorem{lemma}[theorem]{Lemma}
\newtheorem{proposition}[theorem]{Proposition}
\newtheorem{corollary}[theorem]{Corollary}
\newtheorem{definition}{Definition}
\newtheorem{example}{Example}
\newtheorem{remark}{Remark}
\numberwithin{equation}{section}
\def\DJ{\leavevmode\setbox0=\hbox{D}\kern0pt\rlap
	{\kern.04em\raise.188\ht0\hbox{-}}D}
\begin{document}
	\title[Unbounded order demi Dunford-Pettis operators]{Unbounded order demi Dunford-Pettis operators}

	\author{N. Hafidi}
	\author{J. H'michane}
	
	\address{Noufissa Hafidi, Universit\'{e} Moulay Ismail, Facult\'{e} des Sciences, D\'{e}partement de Math\'{e}matiques, B.P. 11201 Zitoune, Mekn\`{e}s, Morocco.}
	\email{hafidinoufissa@gmail.com}

	\address{Jawad H'michane, Engineering Sciences Lab, ENSA, B.P 241, Ibn Tofail University, Kénitra, "Moroccan Association of Ordred Structures, Operators Theory, Applications and Sustainable Development (MAOSOTA)", Morocco.}
	\email{hm1982jad@gmail.com}
	
	\begin{abstract}
		In this paper , we introduce and study a new concept of unbounded order demi Dunford-Pettis operators. Namely, we investigate some properties for this new class of operators and we study its connection with other known operators, we also establish important results about power unbounded order demi Dunford-Pettis operators, and domination property.
		
		
	\end{abstract}
	
	\keywords{ Uo-convergence. Un-convergence. Unbounded demi Dunford-Pettis. Schur property. Positive Shur property. atomic Banach lattice. Order continuous Banach lattice}
	\subjclass[2010]{46B42, 47B60, 47B65.}
	\maketitle
	
	\section{Introduction }
	
	An operator  $T : X\longrightarrow X$ on a Banach space $X$ is  called demicompact, if for every bounded sequence $(x_n)$ in
	the domain $X$ such that $(x_n-T(x_n))$ norm converges to $x$ in  $X$, there is a convergent
	subsequence of $(x_n)$. The concept of demicompactness was first presented and studied by Petryshyn in \cite{PE} and had been greatly expanded by other different studies \cite{JER,JER1}. Demi compact operators considered as generalization of compact operators and defined to solve some fixed point concerns. 
	
	In their paper  \cite{JER3}, and following the proceeding of generalization, Benkhaled et
	al. introduced the notion of demi Dunford-Pettis operators, regarded as generalization of Dunford-Pettis operators. Recall from \cite{JER3} that an operator $T: X\longrightarrow X$ is said to be demi Dunford–Pettis, if for every sequence $(x_n)$ in $X$ such that  $x_{n} {\overset{w}{\rightarrow}} 0$ and
	$\|x_n-T(x_n)\|\rightarrow 0$, we have $\|x_n\|\rightarrow 0$.
	
	Furthermore, several recent papers focused on the  concept "Demi", and introduced some operators on Banach lattices related to that (\cite{JER2,BEN1,BEN2,GOC,GOC1}).
	
	A net $(x_\alpha)$ of a Banach lattice $E$ is unbounded order convergent (unbounded norm convergent) to $x$, if $|x_{\alpha}-x|\wedge y {\overset{0}{\rightarrow}} 0$ ( if $\||x_{\alpha}-x|\wedge y \|\rightarrow 0$) for each $y\in E^+$. We denote this convergence
	by $x_{\alpha} {\overset{uo}{\rightarrow}} x$ ($x_{\alpha} {\overset{un}{\rightarrow}} x$) and we write  $(x_{\alpha})$ uo-converges to $x$ ($(x_{\alpha})$ un-converges to $x$).   Unbounded order convergence has recently been considerably studied  in many works \cite{UO1,UO2}.
	
	The pupose of the prensent work is to pursue the study of operators on Banach lattices associated to the "Demi" approach. For this aim, we will introduce and study new class of operators that we called unbounded order demi-Dunford-Pettis operators.	An operator $T:E\longrightarrow E$ on a Banach lattice $E$  is said to be unbounded order demi Dunford-Pettis (abb, uo-demi Dunford–Pettis), if for every weakly null sequence $(x_n)$ in $E$ such that $(x_n-T(x_n))$ is uo-convergent to $0$, then $(x_n)$ is norm convergent to $0$.

	The paper is organized as follows. In section \ref{sec1}, we will define the notion of unbounded order demi-Dunford-Pettis operators (see Definition \ref{def}), after we present the main result of this section which is the characterization of unbounded order demi-Dunford-Pettis operators  in terms of Cauchy sequences (see propositions \ref{WS}, \ref{WS1}). In section \ref{sec2}, we discuss the relationships between unbounded order demi-Dunford-Pettis operators and some known classes of operators. Namely, the demi Dunford-Pettis operators (see Propositions \ref{11}, \ref{12}), the un-demi Dunford-Pettis operators (see Proposition \ref{rel1}). We continue in the section \ref{sec5}  by investigating the stability of the above indicated class of  operators under composition  with arbitary bounded operator and  under multiplication by scalar (see Proposition \ref{composition} and Proposition \ref{multi}). We also provide intersting results about the sum of  unbounded order demi-Dunford-Pettis operators (see Proposition \ref{power}). Further, we establish more properties about the power of unbounded order demi-Dunford-Pettis operators. In section \ref{sec7},  We conclude our study  by considering the domination problem for this  class of operators (see Proposition \ref{dom}).

	To state our results, we need to fix some notations and recall some definitions.\	
	Throughout this paper $X$ and $Y$ will denote real Banach spaces, $E$ and $F$ will denote real Banach lattices. A Banach lattice is a Banach space $(E,\Vert \cdot \Vert )$ such that $E$ is a vector lattice and its norm satisfies the following property: for each $x,y\in E$ such that $|x|\leq |y|$, we have $\Vert x\Vert \leq \Vert y\Vert $. If $E$ is a Banach lattice, its topological dual $E^{\prime }$, endowed with the dual norm, is also a Banach lattice. A norm $\Vert \cdot \Vert $ of a Banach lattice $E$ is order continuous if for each net $(x_{\alpha })$ such that $x_{\alpha }\downarrow 0$ in $E$, $(x_{\alpha })$ converges to $0$ for the norm $\Vert \cdot \Vert $, where the notation $x_{\alpha }\downarrow 0$ means that $(x_{\alpha })$ is decreasing, its infimum exists and $\inf
	(x_{\alpha })=0$.  For each $ x, y \in E$  with $x\leq y$, the set $\left[  x; y\right]  = \left\lbrace z\in E: x\leq z \leq y \right\rbrace $ is called
	an order interval. A subset of $E$ is said to be order bounded if it is included in some order
	interval. Also, a vector lattice $E$ is Dedekind $\sigma$-complete if every majorized countable nonempty subset of $E$ has a supremum. A nonzero element $x$ of a vector lattice $E$ is discrete (atom) if the order ideal generated by $x$ equals the vector subspace generated by $x$. The vector lattice $E$ is discrete (atomic), if it admits a complete disjoint system of discrete elements. A Banach space $X$ is said to have the Schur property if every weakly null
	sequence in $X$ is norm null. A Banach lattice $E$ has the positive
	Schur property, if each weakly null sequence with a positive terms in $E$ is norm null. It is pointed out in Theorem 3.7 \cite{GTX}  that $E$ has the positive Schur property 
	if, and only if each weakly null and uo-null sequence in $E$ converges
	to zero in norm. 
	
	We will use the term operator to mean a bounded linear mapping. A linear mapping $T$ from a vector lattice $E$ into another $F$ is order bounded if it carries order bounded set of $E$ into order bounded set of $F$. A linear mapping between two vector lattices $E$ and $F$ is positive if $T(x)\geq 0$ in $F$ whenever $x\geq 0$ in $E$.
	
	Recall from \cite{GOC1} that an operator  $T:E\longrightarrow E$ on a Banach lattice $E$ is called
	unbounded demi Dunford-Pettis (abb, un-demi Dunford–Pettis), if for every weakly null sequence $(x_n)$ in $E$ such that $(x_n-T(x_n))$ is  unbounded norm convergent  to $0$ as $n\rightarrow \infty$, then $(x_n)$ is  unbounded norm converges to $0$ as $n\rightarrow \infty$.

	\section{Unbounded order demi Dunford-Pettis operators.} \label{sec1}
	We start, by giving the definition of  unbounded order demi Dunford-Pettis operators.
	\begin{definition} \label{def}
		An operator $T:E\longrightarrow E$ on a Banach lattice $E$  is called unbounded order demi Dunford-Pettis (abb, uo-demi Dunford–Pettis), if for every weakly null sequence $(x_n)$ in $E$ such that $(x_n-T(x_n))$ is uo-convergent to $0$, then $(x_n)$ is norm convergent to $0$. The collection of unbounded order demi Dunford-Pettis will be denoted by $\mathcal{L}^{uo}_{ddp}(E)$.
	\end{definition}
	
	The following immediate result  characterize the Banach lattices on which all operators are  uo-demi Dunford–Pettis.
	\begin{proposition} \label{SC}
		For a Banach lattice $E$, the following statements are equivalent:
		\begin{enumerate}
			\item	Every operator $T:E\longrightarrow E$  is uo-demi Dunford–Pettis.
			\item The identity operator  $Id_{E}:E\longrightarrow E$ is uo-demi Dunford–Pettis.
			\item  $E$ has the Schur property.
		\end{enumerate}
	\end{proposition}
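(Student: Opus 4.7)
The plan is to establish the implications in the cyclic order $(1)\Rightarrow(2)\Rightarrow(3)\Rightarrow(1)$, and the key observation is that the equivalence is essentially immediate once one unpacks Definition~\ref{def} for the identity operator.

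For $(1)\Rightarrow(2)$: this is trivial, since $Id_E$ is in particular a bounded operator on $E$, so it falls under the hypothesis of (1).

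For $(2)\Rightarrow(3)$: I would take an arbitrary weakly null sequence $(x_n)\subset E$ and observe that $x_n-Id_E(x_n)=0$ for every $n$, so the sequence $(x_n-Id_E(x_n))$ is trivially uo-convergent to $0$. Applying the assumed uo-demi Dunford--Pettis property of $Id_E$, we conclude $\|x_n\|\to 0$. Since this holds for every weakly null sequence, $E$ has the Schur property.

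For $(3)\Rightarrow(1)$: I would take an arbitrary operator $T:E\to E$ and a weakly null sequence $(x_n)\subset E$ with $(x_n-T(x_n))$ uo-convergent to $0$. By the Schur property of $E$, the weak nullity of $(x_n)$ already forces $\|x_n\|\to 0$, so the uo-convergence hypothesis on $(x_n-T(x_n))$ is not even needed. Hence $T\in\mathcal{L}^{uo}_{ddp}(E)$.

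There is no serious obstacle in this proof; the whole content sits in the observation that the defining condition of uo-demi Dunford--Pettis collapses trivially for $T=Id_E$ (because the difference vanishes) and that under Schur's property the uo-condition on $(x_n-T(x_n))$ becomes vacuous. It is worth remarking that the proposition is the uo-analogue of the corresponding characterisation known for the un-demi Dunford--Pettis class, and that, unlike more delicate results involving power operators or domination, it relies on none of the finer lattice structure (atoms, order continuity) that appears later in the paper.
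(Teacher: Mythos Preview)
Your proof is correct and is simply a careful unpacking of what the paper dismisses as ``Obvious''; the cyclic implications $(1)\Rightarrow(2)\Rightarrow(3)\Rightarrow(1)$ you give are exactly the intended argument. There is nothing to add.
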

	\begin{proof}
		Obvious
		
	\end{proof}

	\begin{proposition} \label{22}
		For a Banach lattice $E$, the following statements are equivalent:
		\begin{enumerate}
			\item  For all $\alpha\neq 1$, $\alpha Id_{E}$ is uo-demi Dunford–Pettis.
			\item $-Id_{E}$ is uo-demi Dunford–Pettis.
			\item  $E$ has the positive Schur property.
		\end{enumerate}
	\end{proposition}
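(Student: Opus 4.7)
The plan is to exploit the identity $x_n - \alpha\, Id_E(x_n) = (1-\alpha)x_n$ together with the characterization of the positive Schur property recalled from Theorem 3.7 of \cite{GTX}: namely, that $E$ has the positive Schur property if and only if every weakly null and uo-null sequence in $E$ is norm null. The implications $(1)\Rightarrow (2)$ is immediate since $-1 \neq 1$, so the real content lies in $(2)\Rightarrow (3)$ and $(3)\Rightarrow (1)$.

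For $(2)\Rightarrow (3)$, I would take a weakly null sequence $(x_n)$ in $E$ that is uo-convergent to $0$ and show $\|x_n\| \to 0$. With $T = -Id_E$ we have $x_n - T(x_n) = 2x_n$; since uo-convergence is preserved under scalar multiplication, $(2x_n)$ is uo-null. The hypothesis that $-Id_E$ is uo-demi Dunford–Pettis then forces $\|x_n\|\to 0$, and invoking the characterization from \cite{GTX} yields the positive Schur property.

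For $(3)\Rightarrow (1)$, fix $\alpha \neq 1$ and let $(x_n)$ be a weakly null sequence with $(x_n - \alpha x_n) = ((1-\alpha)x_n)$ uo-null. Dividing by the nonzero scalar $1-\alpha$, I obtain $x_n \xrightarrow{uo} 0$. Since $(x_n)$ is simultaneously weakly null and uo-null, the positive Schur property (again via Theorem 3.7 of \cite{GTX}) gives $\|x_n\|\to 0$, so $\alpha Id_E$ is uo-demi Dunford–Pettis.

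There is no significant obstacle here; the proof is essentially a bookkeeping exercise once one notices the scalar factor $1-\alpha$. The only subtle point to flag is the requirement $\alpha \neq 1$, which is precisely what permits one to pass the scalar across the uo-convergence; if $\alpha = 1$, the difference $x_n - Id_E(x_n)$ is identically zero and gives no information, so the equivalence would trivialize into the Schur property (compare Proposition \ref{SC}).
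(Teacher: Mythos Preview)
Your proof is correct and follows essentially the same approach as the paper: both arguments reduce to the identity $x_n - \alpha\,Id_E(x_n) = (1-\alpha)x_n$ and appeal to the characterization of the positive Schur property from Theorem~3.7 of \cite{GTX}. Your additional remark on why $\alpha\neq 1$ is essential is a nice clarification that mirrors the paper's subsequent remark.
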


	\begin{proof}
		$(1)\Rightarrow (2)$ Obvious. 

		$(2)\Rightarrow (3)$ Assume that $x_{n} {\overset{w}{\rightarrow}} 0$ and $x_{n} {\overset{uo}{\rightarrow}} 0$. Now, as $x_n-(-Id_{E}(x_n))=2x_n {\overset{uo}{\longrightarrow}} 0$, and by the fact that $-Id_{E}$ is uo-demi Dunford–Pettis, we have that $\|x_{n}\|\rightarrow 0$. By Theorem 3.7 \cite{GTX} it follows that  $E$ has the positive Schur property.
		
		$(3)\Rightarrow (1)$ We need to show that the operator $\alpha Id_{E}$ is uo-demi Dunford–Pettis for all $\alpha\neq 1$. To this end,  let $x_{n} {\overset{w}{\rightarrow}} 0$ and $x_n-(\alpha Id_{E}(x_n))=(1-\alpha)x_n {\overset{uo}{\longrightarrow}} 0$, obviously $x_n {\overset{uo}{\longrightarrow}} 0$. Now, since $E$ has the positive Schur property, the Theorem 3.7 \cite{GTX} asserts that $\|x_{n}\|\rightarrow 0$, which shows that $\alpha Id_{E}$ is uo-demi Dunford–Pettis.

	\end{proof}	
	
	\begin{remark}
		The condition $\alpha\neq 1$ is essential in the Proposition \ref{22}. Indeed,
		the Banach lattice	$L^{1}[0,1]$ has the positive Schur Property. Howerver,
		the identity operator $Id_{L^{1}[0,1]}:L^{1}[0,1]\rightarrow L^{1}[0,1]$ is not uo-demi Dunford–Pettis. Otherwise,
		let us consider the
		sequence $(r_n)$ of Rademacher functions on $[0, 1]$, we have that $r_{n} {\overset{w}{\rightarrow}} 0$ in $L^{1}[0,1]$ and it is clear that $r_{n}-r_{n} {\overset{uo}{\rightarrow}} 0$. As
		$Id_{L^{1}[0,1]}$ is uo-demi Dunford–Pettis, it follows that $\|r_{n}\|\rightarrow 0$, which is impossible.  
	\end{remark}

	It is known that unbounded order convergence is not topological. But by Theorem 5.3 \cite{DOT}, we have the following result:
	
	\begin{proposition} \label{uo-un}
		Let $E$ be an atomic order continuous Banach lattice. For any bounded sequence $(x_n)$ of $E$,  the following assertions are equivalent:
		
		\begin{enumerate}
			\item  $(x_n)$ is uo-null. 
			\item  $(x_n)$ is un-null.
			
			\item Every subsequence of  $(x_n)$ is uo-null.
		\end{enumerate}
		
	\end{proposition}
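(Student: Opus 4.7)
The plan is to split the three-way equivalence into a trivial pair $(1)\Leftrightarrow(3)$ and a substantive pair $(1)\Leftrightarrow(2)$. Only the latter uses the atomic and order continuous hypotheses; the former is a purely formal statement about sequences and uo-convergence, requiring nothing about the structure of $E$.

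First I would dispatch $(1)\Leftrightarrow(3)$. The direction $(3)\Rightarrow(1)$ is immediate because $(x_n)$ is a subsequence of itself. For $(1)\Rightarrow(3)$, I would recall that $(x_n)$ uo-null means that for every $y\in E^+$ there exists a decreasing sequence $b_n\downarrow 0$ with $|x_n|\wedge y\le b_n$; for any subsequence $(x_{n_k})$ the tail $(b_{n_k})$ is still decreasing and still has infimum $0$, so $(x_{n_k})$ is uo-null too. This is essentially bookkeeping.

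The substantive equivalence is $(1)\Leftrightarrow(2)$. Here I would invoke Theorem 5.3 of \cite{DOT} directly: in an atomic order continuous Banach lattice, uo- and un-convergence coincide on every norm bounded subset. Since $(x_n)$ is assumed bounded, its uo-nullity and un-nullity become the same assertion, yielding the desired equivalence. The proposition is thus essentially a packaging of this cited result together with the formal subsequence remark.

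The only real obstacle is the content hidden inside Theorem 5.3 of \cite{DOT}: atomicity is used to reduce control of $|x_n|\wedge y$ to control on individual atoms, where each order interval $[0,e]$ is one-dimensional and order-null coincides with norm-null; order continuity is then used to upgrade this pointwise-on-atoms control to genuine norm convergence on bounded sets. Neither hypothesis can be dropped, but beyond citing that theorem no further calculation is needed here.
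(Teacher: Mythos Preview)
Your proposal is correct and matches the paper's approach exactly: the paper does not give an independent proof but simply records the proposition as a consequence of Theorem~5.3 of \cite{DOT}, which is precisely the citation you invoke for the substantive equivalence $(1)\Leftrightarrow(2)$. Your added justification of the formal equivalence $(1)\Leftrightarrow(3)$ via the dominating sequence $b_n\downarrow 0$ is more detail than the paper provides, but the overall strategy is identical.
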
	
	In the following, we establish a characterization of uo-demi Dunford–Pettis operators in terms of weak Cauchy sequences.
	\begin{proposition} \label{WS}
		Let $E$ be an atomic  order continuous Banach lattice, and let $T:E\longrightarrow E$ be an operator. Then, the following assertions are equivalent:
		\begin{enumerate}
			\item $T$ is uo-demi Dunford–Pettis.
			\item For every weak Cauchy sequence $(x_n)$ in $E$ such that every subsequence of the  sequence $(x_n-T(x_n))$ is uo-null, we
			have $(x_n)$  is norm convergent.
		\end{enumerate}
	\end{proposition}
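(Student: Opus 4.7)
The plan is to prove the two implications separately, leaning on two elementary facts: any subsequence of a uo-null sequence is uo-null (a dominating decreasing sequence restricts to a dominating decreasing subsequence), and uo-convergence is linear, hence preserved under taking differences.

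For $(2)\Rightarrow(1)$, let $(x_n)$ be weakly null with $x_n-T(x_n)\overset{uo}{\rightarrow}0$. A weakly null sequence is a fortiori weakly Cauchy, and by the first fact above every subsequence of $(x_n-T(x_n))$ is uo-null, so hypothesis $(2)$ produces a norm limit for $(x_n)$. This norm limit coincides with the weak limit $0$, so $\|x_n\|\rightarrow 0$, which shows that $T$ is uo-demi Dunford--Pettis.

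For $(1)\Rightarrow(2)$, I would argue by contradiction. Suppose $T$ is uo-demi Dunford--Pettis, $(x_n)$ is weak Cauchy with every subsequence of $(x_n-T(x_n))$ uo-null, but $(x_n)$ fails to be norm Cauchy. Then there exist $\varepsilon>0$ and strictly increasing indices $n_1<m_1<n_2<m_2<\cdots$ with $\|x_{n_k}-x_{m_k}\|\geq\varepsilon$ for every $k$. Put $z_k:=x_{n_k}-x_{m_k}$; since $(x_n)$ is weak Cauchy and the indices go to infinity, $(z_k)$ is weakly null. The decomposition $z_k-T(z_k)=(x_{n_k}-T(x_{n_k}))-(x_{m_k}-T(x_{m_k}))$ exhibits this as a difference of two subsequences of $(x_n-T(x_n))$, each uo-null by the standing hypothesis, so by linearity $z_k-T(z_k)\overset{uo}{\rightarrow}0$. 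Applying the uo-demi Dunford--Pettis property of $T$ gives $\|z_k\|\rightarrow 0$, contradicting $\|z_k\|\geq\varepsilon$. Hence $(x_n)$ is norm Cauchy, and therefore norm convergent.

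The only delicate point is the direction $(1)\Rightarrow(2)$: the assumption that \emph{every} subsequence of $(x_n-T(x_n))$ is uo-null (and not merely the sequence itself) is precisely what is needed to guarantee that both $(x_{n_k}-T(x_{n_k}))_k$ and $(x_{m_k}-T(x_{m_k}))_k$ remain uo-null for the arbitrary extracted indices, so that the difference argument closes. The atomic and order-continuous hypotheses do not appear explicitly in the argument, but set the natural framework of Proposition \ref{uo-un} in which the sequential subsequence formulation of uo-convergence is the relevant one.
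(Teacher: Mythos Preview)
Your proof is correct and follows essentially the same strategy as the paper's: $(1)\Rightarrow(2)$ by contradiction via differences of extracted subsequences (the paper uses consecutive terms $x_{n_{k+1}}-x_{n_k}$ of a single subsequence, you use two interleaved subsequences, but the mechanism is identical), and $(2)\Rightarrow(1)$ by noting that a weakly null sequence is weak Cauchy and then applying~(2).

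The one substantive point of divergence is how, in $(2)\Rightarrow(1)$, you pass from ``$(x_n-T(x_n))$ is uo-null'' to ``every subsequence of $(x_n-T(x_n))$ is uo-null''. The paper invokes Proposition~\ref{uo-un} for this step, which is where the atomic and order-continuous hypotheses on $E$ enter. You instead argue it directly from the definition of order convergence (a dominating sequence $y_n\downarrow 0$ restricts to $y_{n_k}\downarrow 0$), which is valid in any Banach lattice. Your route is more elementary, and your closing remark is on target: the argument as you have written it does not use the atomic or order-continuous structure of $E$, so the equivalence in fact holds without those hypotheses.
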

	
	\begin{proof} For $(1)\Rightarrow (2)$ Let  $(x_n)$ be a weak Cauchy
		sequence of $E$  such that every subsequence of the sequence $(x_n-T(x_n))$ is uo-null. By way of contradiction, we assume that $(x_n)$  is
		not a norm Cauchy sequence of $E$, then there exist some $\varepsilon>0$ and a subsequence $(x_{n_k} )$ 
		of $(x_n)$  satisfying $ \|x_{n_{k+1}}-x_{n_k} \|>\varepsilon$ for all $k$. So, we have that $ x_{n_{k+1}}-x_{n_k}{\overset{w}{\rightarrow}} 0$, and by hypothesis $ x_{n_{k+1}}-x_{n_k}-T(x_{n_{k+1}}-x_{n_k}) {\overset{uo}{\rightarrow}} 0$. Now, the fact that $T$ is uo-demi Dunford–Pettis yields
		$ \|x_{n_{k+1}}-x_{n_k} \|\rightarrow 0 $, which is impossible. Therefore,  $(x_n)$   is a norm Cauchy sequence, and hence it is
		norm convergent in $E$.
		
		For $(2)\Rightarrow (1)$ Let $(x_n)$ be a sequence of $ E $ such that  $x_n {\overset{w}{\rightarrow}} 0$ and $x_n-T(x_n){\overset{uo}{\rightarrow}} 0$, we have to show that $\|x_{n}\|\rightarrow 0$, it is clear that $(x_n)$ is a weak Cauchy
		sequence of $E$, and Proposition \ref{uo-un} asserts  that every subsequence of the sequence $(x_n-T(x_n))$ is uo-null. In view of (2), we obtain that  $(x_n)$ is
		norm convergent. Thus, $T$ is uo-demi Dunford-Pettis.
		
	\end{proof}

	Using the same argument on  the above proof, we establish the following result.
	
	\begin{proposition} \label{WS1}
		Let $E$  be a  Banach lattice with order continuous norm, and let $T:E\longrightarrow E$ be a operator. Then, the following assertions are equivalents:
		\begin{enumerate}
			\item $T$ is uo-demi Dunford–Pettis.
			\item For every weak Cauchy sequence $(x_n)$ in $E$ such that the sequence $(x_n-T(x_n))$ is un-null, we
			have $(x_n)$  is norm convergent.
		\end{enumerate}
	\end{proposition}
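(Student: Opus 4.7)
The plan is to mimic the proof of Proposition~\ref{WS}, but without the atomicity of $E$; the role previously played by Proposition~\ref{uo-un} (the equivalence of un- and uo-null on bounded sequences) must now be taken over by the softer relationship between un- and uo-convergence available in a general order continuous Banach lattice.

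For $(1)\Rightarrow(2)$, I would argue by contradiction exactly along the lines of Proposition~\ref{WS}. Given a weakly Cauchy $(x_n)$ with $(x_n-T(x_n))$ un-null which fails to be norm Cauchy, I would extract a subsequence $(x_{n_k})$ with $\|x_{n_{k+1}}-x_{n_k}\|>\varepsilon$ for some $\varepsilon>0$. Setting $y_k := x_{n_{k+1}}-x_{n_k}$, the sequence $(y_k)$ is bounded and weakly null, and by the linearity of un-convergence
\[
y_k - T(y_k) \;=\; \bigl(x_{n_{k+1}}-T(x_{n_{k+1}})\bigr)-\bigl(x_{n_k}-T(x_{n_k})\bigr)
\]
is also un-null. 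To invoke (1), however, I need this sequence to be uo-null rather than merely un-null.

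This passage is the main obstacle and is where the proof departs from that of Proposition~\ref{WS}: the pointwise equivalence between un- and uo-convergence for bounded sequences is no longer available in the non-atomic setting. What survives in an order continuous Banach lattice is the standard subsequential principle that every bounded un-null sequence admits a uo-null subsequence. Extracting such a subsequence of $(y_k-T(y_k))$ and relabelling, I would obtain a weakly null subsequence of $(y_k)$ still satisfying $\|y_k\|>\varepsilon$, but now with $y_k-T(y_k)\overset{uo}{\to}0$; applying the assumption that $T$ is uo-demi Dunford--Pettis to this subsequence produces $\|y_k\|\to 0$, the desired contradiction. Hence $(x_n)$ is norm Cauchy and therefore norm convergent.

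The reverse implication $(2)\Rightarrow(1)$ is straightforward. Given a weakly null sequence $(x_n)$ with $(x_n-T(x_n))\overset{uo}{\to}0$, I would observe that $(x_n)$ is in particular weakly Cauchy and that in an order continuous Banach lattice every uo-null sequence is un-null, because for each $y\in E^+$ one has $|x_n-T(x_n)|\wedge y\overset{o}{\to}0$, and order continuity of the norm converts this to $\||x_n-T(x_n)|\wedge y\|\to 0$. Applying (2) yields a norm limit for $(x_n)$, necessarily zero since $(x_n)\overset{w}{\to}0$, so $T$ is uo-demi Dunford--Pettis.
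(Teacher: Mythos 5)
Your proposal is correct and follows essentially the same route the paper intends (its proof is the "same argument" as Proposition~\ref{WS}, replacing the atomic uo/un equivalence by the standard fact that, in an order continuous Banach lattice, a un-null sequence has a uo-null subsequence, and using order continuity to upgrade uo-null to un-null in the converse direction). The only cosmetic difference is that you state the subsequence principle for bounded sequences, whereas it holds for arbitrary un-null sequences; this does not affect the argument.
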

	\section{Relationship between uo-demi Dunford-Pettis and other operators} \label{sec2}
	
	Recall from \cite{JER2}, that  an operator $T: X\longrightarrow X$ on a Banach space $X$ is said to be demi Dunford–Pettis, if for every sequence $(x_n)$ in $X$ such that  $x_{n} {\overset{w}{\rightarrow}} 0$ and
	$\|x_n-T(x_n)\|\rightarrow 0$, we have $\|x_n\|\rightarrow 0$.
	
	In the following we investigate the relationship between  demi Dunford–Pettis and uo-demi Dunford-Pettis operators.

	\begin{proposition} \label{11}
		Let $E$ be a Banach lattice. Every  uo-demi Dunford-Pettis  operator $T:E\longrightarrow E$  is demi Dunford-Pettis. 
		
	\end{proposition}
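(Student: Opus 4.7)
The plan is to unwind the two definitions and notice that the hypothesis for demi Dunford--Pettis is strictly stronger than the hypothesis for uo-demi Dunford--Pettis, so the implication is essentially tautological once we invoke the standard fact that norm convergence implies uo-convergence in a Banach lattice.

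In more detail, I would start by fixing an operator $T:E\longrightarrow E$ that is uo-demi Dunford--Pettis, and a sequence $(x_n)$ in $E$ witnessing the hypothesis of the demi Dunford--Pettis property: $x_n\overset{w}{\rightarrow}0$ and $\|x_n-T(x_n)\|\to 0$. The goal is to show $\|x_n\|\to 0$. The key step is to upgrade the norm-null conclusion on $(x_n-T(x_n))$ to uo-null: since for any $y\in E^+$ one has $0\le |x_n-T(x_n)|\wedge y\le |x_n-T(x_n)|$, the norm inequality $\||x_n-T(x_n)|\wedge y\|\le \|x_n-T(x_n)\|\to 0$ gives un-convergence, and in particular (via order continuity of the lattice operations, or directly from the inequality $|x_n-T(x_n)|\wedge y\to 0$ in norm forcing a subsequence to converge to $0$ in order) one obtains $x_n-T(x_n)\overset{uo}{\rightarrow}0$. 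A cleaner route is simply to cite the standard fact that norm-null sequences in a Banach lattice are uo-null.

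Once $x_n-T(x_n)\overset{uo}{\rightarrow}0$ is in hand, together with $x_n\overset{w}{\rightarrow}0$, the uo-demi Dunford--Pettis property of $T$ immediately yields $\|x_n\|\to 0$, which is exactly what is required for $T$ to be demi Dunford--Pettis.

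I do not expect any genuine obstacle here; the only care point is to justify cleanly that norm convergence implies uo-convergence, which follows directly from the definition of uo-convergence together with the lattice-norm inequality $\||x_n|\wedge y\|\le \|x_n\|$ and the order-continuity of the absolute-value map for countable suprema (or, if one prefers, by remarking that a norm-null sequence is un-null, and a sequence that is un-null and norm bounded has a uo-null subsequence; but in fact no subsequence argument is even needed, as the pointwise inequality already forces $|x_n|\wedge y\to 0$ in norm, which combined with order boundedness by $y$ gives order convergence of $|x_n|\wedge y$ to $0$).
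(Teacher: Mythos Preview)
Your main route has a genuine gap: the assertion that norm-null sequences in a Banach lattice are automatically uo-null is \emph{false}. In $L^{1}[0,1]$, uo-convergence of sequences coincides with almost-everywhere convergence, and the typewriter sequence (characteristic functions of intervals whose lengths tend to $0$ but which sweep repeatedly across $[0,1]$) satisfies $\|f_n\|_{1}\to 0$ and $0\le f_n\le 1$, yet $(f_n)$ converges almost nowhere, so $f_n\overset{uo}{\nrightarrow}0$. In particular your final parenthetical --- that $\||x_n|\wedge y\|\to 0$ together with the order bound $0\le |x_n|\wedge y\le y$ forces $|x_n|\wedge y\overset{o}{\to}0$ --- is exactly the step that fails in this example.

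The fix is precisely the subsequence argument you mention and then dismiss, and this is the route the paper takes. From $\|x_n-T(x_n)\|\to 0$ one passes (inside any prescribed subsequence) to a further subsequence with $\sum_k\|x_{n_k}-T(x_{n_k})\|<\infty$; then $|x_{n_k}-T(x_{n_k})|\le \sum_{j\ge k}|x_{n_j}-T(x_{n_j})|\downarrow 0$, so $x_{n_k}-T(x_{n_k})\overset{o}{\to}0$ and hence $\overset{uo}{\to}0$. Combined with $x_{n_k}\overset{w}{\to}0$ and the uo-demi Dunford--Pettis property of $T$, this gives $\|x_{n_k}\|\to 0$. Since this works starting from an arbitrary subsequence of $(x_n)$, every subsequence has a further norm-null subsequence, whence $\|x_n\|\to 0$. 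The paper's proof records exactly this subsequence step; to make it fully rigorous one should note, as above, that the argument applies to every subsequence.
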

	
	\begin{proof}
		Let $(x_n)$ be a sequence of $E$ such that  $x_n {\overset{w}{\rightarrow}} 0$ and $\|x_n-T(x_n)\|\rightarrow 0$, then there exist $(x_{n_k})$ a subsequence of $(x_n)$  such that  $x_{n_k}-T(x_{n_k}) {\overset{uo}{\rightarrow}} 0$. By hypothesis $T$ is uo-demi Dunford-Pettis, so $\|x_{n_k}\|\rightarrow 0$, and therefore $\|x_{n}\|\rightarrow 0$. 
		
	\end{proof}

	\begin{remark}
		The converse of the Proposition \ref{11} is not true in general. Indeed, let us consider the operator $\widetilde{T}:\widetilde{E}\longrightarrow \widetilde{E}$ defined via the matrix:
		$$\begin{pmatrix}
			0 & 0\\ 
			T & Id_{\ell^1}
		\end{pmatrix}$$
		Where, $\widetilde{E}=\ell^{\infty}\times\ell^{1}$ and $T$ an operator from $\ell^{\infty}$ into $\ell^{1}$.
		We will show that  $\widetilde{T}$ is Dunford-Pettis (and hence demi Dunford-Pettis by (Proposition 2.2 \cite{BEN1})). To this end, let $\tilde{x}_n=(x_n,y_n)$ be a sequence of $\widetilde{E}$ such that  $\tilde{x}_n=(x_n,y_n){\overset{w}{\rightarrow}} 0$ in $\widetilde{E}$. By a simple calculation, we obtain: $\|\widetilde{T}(\tilde{x}_n)\|_{\widetilde{E}}=\|T(x_n)+ Id_{\ell^1}(y_n)\|_{\ell^1}\leq \|T(x_n)\|_{\ell^1}+\|y_n\|_{\ell^1}\rightarrow 0$, therefore   $\widetilde{T}$ is Dunford-Pettis. Now, we will establish  that $\widetilde{T}$ is not uo-demi Dunford-Pettis, let  $\tilde{x}_n=(e_n,0)$ where the sequence $(e_n)$ is the unit basis of $\ell^{1}$. Clearly, $\tilde{x}_n{\overset{w}{\rightarrow}} 0$ in  and $\tilde{x}_n-\widetilde{T}(\tilde{x}_n)=(e_n,-T(e_n){\overset{uo}{\rightarrow}} 0$, but  $\|\tilde{x}_n\|_{\widetilde{E}}=\|e_n\|_{\ell^1}\nrightarrow 0$.
		
	\end{remark}
	Nevertheless, we obtain  this result;
	\begin{proposition} \label{12}
		For a Banach lattice $E$, such that $E$ has the positive Schur property. Every demi Dunford-Pettis (hence Dunford-Pettis) operator $T:E\longrightarrow E$ is uo-demi Dunford-Pettis. 	
	\end{proposition}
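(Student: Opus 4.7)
The plan is to chain together the positive Schur characterization (Theorem 3.7 of \cite{GTX}) with the demi Dunford--Pettis hypothesis on $T$, to convert the weak-$uo$ hypothesis on $(x_n - T(x_n))$ into the norm hypothesis required by demi Dunford--Pettisness.

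Concretely, take a weakly null sequence $(x_n)$ in $E$ with $x_n - T(x_n) \xrightarrow{uo} 0$; the goal is $\|x_n\| \to 0$. First I would observe that, since $T$ is a bounded operator, $T(x_n) \xrightarrow{w} 0$, and hence $x_n - T(x_n) \xrightarrow{w} 0$ as well. So the sequence $(x_n - T(x_n))$ is simultaneously weakly null and uo-null.

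At this stage I would invoke the positive Schur characterization quoted in the introduction (Theorem 3.7 of \cite{GTX}): in a Banach lattice with the positive Schur property, every weakly null and uo-null sequence is norm null. Applying this to $(x_n - T(x_n))$ gives $\|x_n - T(x_n)\| \to 0$. Then, since $T$ is assumed to be demi Dunford--Pettis and $(x_n)$ is weakly null with $\|x_n - T(x_n)\| \to 0$, the definition of demi Dunford--Pettis directly yields $\|x_n\| \to 0$, which is exactly the uo-demi Dunford--Pettis conclusion.

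There is no real obstacle here: the proof is essentially a two-line chain, and the only ``content'' is the use of the PSP characterization to upgrade uo-null to norm-null on the residual sequence $(x_n-T(x_n))$. The parenthetical ``hence Dunford--Pettis'' in the statement is covered automatically, since every Dunford--Pettis operator is demi Dunford--Pettis (as noted via Proposition 2.2 of \cite{BEN1} in the preceding remark).
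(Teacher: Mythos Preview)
Your proposal is correct and follows essentially the same route as the paper: observe that $(x_n - T(x_n))$ is weakly null, apply Theorem~3.7 of \cite{GTX} (the positive Schur characterization) to upgrade uo-null to norm-null on this residual, and then invoke the demi Dunford--Pettis hypothesis to conclude $\|x_n\|\to 0$. The arguments are identical in substance.
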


	\begin{proof}
		Let $(x_n)$ be a sequence of $E$ such that $x_{n} {\overset{w}{\rightarrow}} 0$ and $x_n-T(x_n){\overset{uo}{\rightarrow}} 0$. We know that $x_n-T(x_n){\overset{w}{\rightarrow}} 0$, and since $E$ is having the positive Schur property, Theorem 3.7 \cite{GTX} asserts that $\|x_n-T(x_n)\|\rightarrow 0$. The fact that $T$ is demi Dunford-Pettis operator yields  $\|x_{n}\|\rightarrow 0$, therefore $T$ is uo-demi Dunford-Pettis.	
		
	\end{proof}
	
	In the following, we give the  necessary and sufficient condition under which each operator is uo-demi Dunford–Pettis.
	
	\begin{proposition}
		For  a Banach lattice $E$, the following are equivalent:
		\begin{enumerate}
			\item Every operator $T:E\longrightarrow E$ is uo-demi Dunford-Pettis
			\item Every operator $T:E\longrightarrow E$  is demi Dunford-Pettis
			\item $E$ has the  Schur property.
			
		\end{enumerate}
	\end{proposition}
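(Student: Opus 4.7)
The plan is to set up a three-step cycle $(1)\Rightarrow(2)\Rightarrow(3)\Rightarrow(1)$, where each implication either reduces to a proposition already established in the paper or is a one-line check against the definition.

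For $(1)\Rightarrow(2)$, I would simply invoke Proposition \ref{11}, which states that every uo-demi Dunford--Pettis operator on a Banach lattice is demi Dunford--Pettis. Applying this pointwise to every operator $T:E\longrightarrow E$ gives the claim immediately, with no further work needed.

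For $(2)\Rightarrow(3)$, the idea is to specialize the hypothesis to the identity operator. Assume every operator $T:E\longrightarrow E$ is demi Dunford--Pettis; in particular, $Id_E$ is demi Dunford--Pettis. Take any weakly null sequence $(x_n)$ in $E$. Then $\|x_n - Id_E(x_n)\|=0$ for all $n$, so trivially $\|x_n-Id_E(x_n)\|\to 0$. The demi Dunford--Pettis property of $Id_E$ then forces $\|x_n\|\to 0$, which is exactly the Schur property of $E$.

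For $(3)\Rightarrow(1)$, I would appeal to Proposition \ref{SC}, which records that the Schur property of $E$ already implies that every operator $T:E\longrightarrow E$ is uo-demi Dunford--Pettis. This closes the cycle.

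There is no real obstacle here: each implication is either a direct quotation of a previous result or a trivial specialization to $Id_E$; the only mild subtlety is remembering that the sequence $(x_n-Id_E(x_n))$ is identically zero, which is what makes the demi Dunford--Pettis hypothesis collapse to the Schur property in step two.
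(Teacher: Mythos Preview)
Your proposal is correct and follows the same cycle $(1)\Rightarrow(2)\Rightarrow(3)\Rightarrow(1)$ as the paper, invoking Proposition~\ref{11} and Proposition~\ref{SC} for the first and third implications exactly as the paper does. The only difference is in $(2)\Rightarrow(3)$: the paper cites Theorem~2.4 of \cite{JER2}, whereas you spell out the underlying one-line argument by specializing to $Id_E$; your version is more self-contained but otherwise identical in spirit.
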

	\begin{proof}
		$(1)\Rightarrow (2)$ See Proposition \ref{11}.
		$(2)\Rightarrow (3)$ Follows from Theorem 2.4 \cite{JER2}. For $(3)\Rightarrow (1)$ See Proposition \ref{SC}. 
		
	\end{proof}

	In the next, we investigate the relationship between uo-demi Duford-Pettis operators and unbounded norm demi Dunford-Pettis operators.	
	
	Recall from \cite{NH} that	an operator $T:E\longrightarrow E$ on a Banach lattice $E$ is called unbounded norm demi Dunford-Pettis (abb, un-demi Dunford-Pettis), if for every sequence $(x_n)$ in $E$ such that $(x_n)$ is weakly null and $(x_n-T(x_n))$ un-converges to $0$, then $(x_n)$  un-converges to $0$. 
	
	\begin{lemma} \label{wuo}
		Let $E$ be a Banach lattice with the Positive Schur property.  If  $(x_n)$  is a sequence of $E$ such that $x_{n} {\overset{w}{\rightarrow}} 0$ and $x_n{\overset{un}{\rightarrow}} 0$, then $\|x_n\| \rightarrow 0$.
	\end{lemma}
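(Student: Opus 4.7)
The plan is to reduce the lemma to the uo-characterisation of the positive Schur property recalled in Theorem 3.7 of \cite{GTX}: under PSP, a sequence that is simultaneously weakly null and uo-null must be norm null. Since we only know that $(x_n)$ is un-null, not uo-null, the task is essentially to bridge the gap between un-convergence and uo-convergence along a subsequence, after which PSP closes the argument.

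I would argue by contradiction. Assume $\|x_n\|\not\to 0$ and extract a subsequence $(x_{n_k})$ with $\|x_{n_k}\|\ge\varepsilon$ for some $\varepsilon>0$; this subsequence is still weakly null and un-null. The key step is then to produce a further subsequence $(x_{n_{k_j}})$ that is uo-null. Once this is done, $(x_{n_{k_j}})$ is weakly null and uo-null, so Theorem 3.7 of \cite{GTX} yields $\|x_{n_{k_j}}\|\to 0$, contradicting $\|x_{n_{k_j}}\|\ge\varepsilon$. Since any subsequence of $(x_n)$ admits such a norm-null further subsequence, the whole sequence is norm null.

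For the extraction, I would exploit the definition of un-convergence, $\bigl\||x_n|\wedge y\bigr\|\to 0$ for every $y\in E^+$, together with a diagonal/summability argument. Choose a countable family $(y_\ell)\subset E^+$ which is sufficient to test uo-convergence; thin out $(x_{n_k})$ so that $\sum_{j}\bigl\||x_{n_{k_j}}|\wedge y_\ell\bigr\|<\infty$ for every $\ell$. Summability then forces $|x_{n_{k_j}}|\wedge y_\ell\to 0$ in order for each $\ell$, and an approximation/monotonicity argument in $y$ lifts this to every $y\in E^+$, which is exactly $x_{n_{k_j}}\xrightarrow{uo}0$.

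The main obstacle is precisely this bridge from un to uo along a subsequence: un-convergence is a topological notion while uo-convergence is not, so the step is delicate in a general Banach lattice. In atomic order-continuous lattices it is immediate from Proposition \ref{uo-un}, but since the lemma assumes only PSP we cannot invoke that shortcut; the argument must rest on the summability construction above (or on the general fact that un-null sequences admit uo-null subsequences).
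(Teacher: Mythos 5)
Your proof rests on the same pillar as the paper's, namely Theorem 3.7 of \cite{GTX}, which under the positive Schur property converts ``weakly null and uo-null'' into ``norm null''. The paper's entire proof of the lemma is the sentence ``Follows from Theorem 3.7 \cite{GTX}'', so it silently skips exactly the point you isolate: the hypothesis supplies un-convergence, not uo-convergence, and the whole content of the lemma is the bridge from un-null to uo-null along a subsequence. Your contradiction-plus-subsequence scheme and the summability mechanism are the right way to build that bridge; the one step you should not wave at is the ``countable family $(y_\ell)$ sufficient to test uo-convergence'', since no such family exists in an arbitrary Banach lattice. The standard repair is to work inside the band $B$ generated by $(x_n)$, which has the weak unit $e=\sum_n 2^{-n}|x_n|/(1+\|x_n\|)$: un-convergence tested at $y=e$ gives $\||x_{n_k}|\wedge e\|\leq 4^{-k}$ along a subsequence, so $u=\sum_k 2^k\,(|x_{n_k}|\wedge e)$ exists in $B$ and $|x_{n_k}|\wedge e\leq 2^{-k}u$ yields $|x_{n_k}|\wedge e\overset{o}{\rightarrow}0$; by Corollary 3.5 of \cite{GTX} (uo-convergence can be tested against a single weak unit) this means $x_{n_k}\overset{uo}{\rightarrow}0$ in $B$, and Theorem 3.2 of \cite{GTX} (uo-convergence passes between regular sublattices) lifts this to $E$. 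With that patch your argument is complete and is, in effect, the detailed proof that the paper's one-line citation presupposes.
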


	\begin{proof}
		Follows from Theorem 3.7 \cite{GTX}.
	\end{proof}

	Note that  un-demi Dunford-Pettis  operator need not to be demi Dunford-Pettis, the converse is also not true in general. Indeed,  the identity operator $Id_{c_0}:c_0\rightarrow c_0$ is clearly un-demi Dunford-Pettis ( proposition 6.2 \cite{GTX}), but fails to be demi Dunford-Pettis. On the other hand, let us consider the operator $\widetilde{T}:\widetilde{E}\longrightarrow \widetilde{E}$ defined via the matrix:
	$$\begin{pmatrix}
		0 & 0\\ 
		T & 2Id_{\ell^{\infty}}
	\end{pmatrix}$$
	Where, $\widetilde{E}=\ell^{1}\times\ell^{\infty}$ and $T$ an operator from $\ell^{1}$ into $\ell^{\infty}$, it follows from Proposition 2.9 \cite{BEN1} that $\widetilde{T}$ is demi Dunford-Pettis operators. But, $\widetilde{T}$  is not un-demi Dunford-Pettis. Indeed, let   $\tilde{x}_n=(0,2e_n)$ where the sequence $(e_n)$ is the unit basis of $\ell^{\infty}$. Clearly $\tilde{x}_n{\overset{w}{\rightarrow}} 0$ in $\widetilde{E}$ and $\|\tilde{x}_n-\widetilde{T}(\tilde{x}_n)\|=0$, therefore $\tilde{x}_n-\widetilde{T}(\tilde{x}_n){\overset{un}{\rightarrow}} 0$ but  $\|\tilde{x}_n\|_{\widetilde{E}}=\|2e_n\|_{\ell^{\infty}}\nrightarrow 0$, hence $\|\tilde{x}_n\|_{\widetilde{E}}{\overset{un}{\nrightarrow}} 0$ . Thus, $\widetilde{T}$  is not un-demi Dunford-Pettis.
	
	We have the following result:

	\begin{proposition} \label{rel1}
		Let $E$ be a Banach lattice such that $E$ has the positive Schur property. Then, the operator $T:E\longrightarrow E$ is uo-demi Dunford-Pettis if, and only if $T:E\longrightarrow E$ is un-demi Dunford-Pettis. 	
	\end{proposition}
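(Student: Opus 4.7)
The plan is to use the positive Schur property to convert both uo-null and un-null hypotheses on the "residual" sequence $(x_n - T(x_n))$ into norm-null statements, and then chain the implications together. The key inputs will be Theorem 3.7 of \cite{GTX} (PSP is equivalent to: weakly null $+$ uo-null $\Rightarrow$ norm null), the analogous Lemma \ref{wuo} (weakly null $+$ un-null $\Rightarrow$ norm null under PSP), and Proposition \ref{11} (every uo-demi Dunford--Pettis operator is demi Dunford--Pettis).

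For the direction $(\Rightarrow)$, I would start with a sequence $(x_n)\subseteq E$ satisfying $x_n\overset{w}{\to}0$ and $x_n-T(x_n)\overset{un}{\to}0$, and I want to deduce $x_n\overset{un}{\to}0$. Since $T$ is bounded, $x_n-T(x_n)\overset{w}{\to}0$, and Lemma \ref{wuo} applied to the sequence $(x_n-T(x_n))$ gives $\|x_n-T(x_n)\|\to 0$. Now invoking Proposition \ref{11}, $T$ is in particular demi Dunford--Pettis, so from $x_n\overset{w}{\to}0$ and $\|x_n-T(x_n)\|\to 0$ we conclude $\|x_n\|\to 0$, hence $x_n\overset{un}{\to}0$.

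For the reverse direction $(\Leftarrow)$, I would start with $x_n\overset{w}{\to}0$ and $x_n-T(x_n)\overset{uo}{\to}0$, aiming to prove $\|x_n\|\to 0$. Again $x_n-T(x_n)\overset{w}{\to}0$, and this time Theorem~3.7 of \cite{GTX} on the sequence $(x_n-T(x_n))$ yields $\|x_n-T(x_n)\|\to 0$, which in turn forces $x_n-T(x_n)\overset{un}{\to}0$ (norm convergence implies un-convergence). Using that $T$ is un-demi Dunford--Pettis, we obtain $x_n\overset{un}{\to}0$. Finally, applying Lemma \ref{wuo} once more to $(x_n)$ itself (using that $x_n$ is weakly null and un-null) gives $\|x_n\|\to 0$, which proves that $T$ is uo-demi Dunford--Pettis.

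There is no real obstacle here: the structure of the argument is symmetric under swapping the roles of un and uo, and the positive Schur hypothesis is precisely what flattens both convergences to norm convergence whenever the weak-null condition is available. The only point to be careful about is invoking the correct hypothesis at the correct stage (Theorem~3.7 of \cite{GTX} versus Lemma \ref{wuo}), since one converts uo to norm and the other converts un to norm.
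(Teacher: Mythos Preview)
Your proof is correct, but it takes a somewhat different route from the paper's, particularly in the forward direction $(\Rightarrow)$.

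In the paper's proof of the ``only if'' part, one starts from $x_n - T(x_n) \overset{un}{\to} 0$ and extracts a subsequence $(x_{n_k})$ with $x_{n_k} - T(x_{n_k}) \overset{uo}{\to} 0$ (legitimate because the positive Schur property forces order continuity, and in that setting un-null sequences admit uo-null subsequences). One then applies the uo-demi Dunford--Pettis hypothesis directly to this subsequence, and finishes with the standard ``every subsequence has a norm-null further subsequence'' argument. You instead bypass subsequences entirely: you feed the residual sequence $(x_n - T(x_n))$, which is both weakly null and un-null, into Lemma~\ref{wuo} to get norm convergence, and then invoke Proposition~\ref{11} to reduce to the ordinary demi Dunford--Pettis property. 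This is cleaner and avoids quoting the un-to-uo subsequence extraction.

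For the ``if'' part the two arguments are closer: the paper passes from $x_n - T(x_n) \overset{uo}{\to} 0$ to $x_n - T(x_n) \overset{un}{\to} 0$ using order continuity, whereas you go through the stronger intermediate step $\|x_n - T(x_n)\| \to 0$ via Theorem~3.7 of \cite{GTX}; both then apply the un-demi Dunford--Pettis hypothesis and Lemma~\ref{wuo} in the same way. Your version is slightly longer here but has the virtue of making the role of the positive Schur property completely explicit at each step, rather than invoking order continuity as a separate consequence.
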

	
	\begin{proof}
		For the "only if" part, let $(x_n)$ be a sequence of $E$ such that $x_{n} {\overset{w}{\rightarrow}} 0$ and $x_n-T(x_n){\overset{un}{\rightarrow}} 0$, therefore there exits $(x_{n_k})$ a subsequence of $(x_{n})$ such that $x_{n_k}-T(x_{n_k}){\overset{uo}{\rightarrow}} 0$, by hypothesis $T$ is uo-demi Dunford–Pettis, hence $\|x_{n_k}\|\rightarrow 0$. By the same argument, we prove that every subsequence of $(x_{n})$ has a further subsequence which is norm-null, therefore  $\|x_{n}\|\rightarrow 0$, which proves that $T$ is un-demi Dunford-Pettis.
		
		For the "if" part,  let $(x_n)$ be a sequence of $E$ such that $x_{n} {\overset{w}{\rightarrow}} 0$ and $x_n-T(x_n){\overset{uo}{\rightarrow}} 0$,  since $E$ is order continuous, we have $x_n-T(x_n){\overset{un}{\rightarrow}} 0$. Now, from the fact that $T$ is un-demi Dunford-Pettis, it follows that $x_{n}{\overset{un}{\rightarrow}} 0$. Finally, Lemma \ref{wuo} asserts that $\|x_{n}\|\rightarrow 0$, hence $T$ is uo-demi Dunford–Pettis.

	\end{proof}

	\begin{corollary}
		For a Banach lattice   $E$  with  the positive Schur property. The following assertions are equivalent :
		\begin{enumerate}
			\item  $T:E\longrightarrow E$ is demi Dunford-Pettis.
			\item  $T:E\longrightarrow E$ is uo-demi Dunford-Pettis.
			\item  $T:E\longrightarrow E$ is un-demi Dunford-Pettis.
		\end{enumerate}
		
	\end{corollary}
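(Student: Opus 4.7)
The plan is to observe that this corollary is simply a repackaging of the three preceding results once the positive Schur property is assumed, so the proof reduces to citing them in the right cyclic order rather than doing any new work. I would organize it as the chain $(1)\Rightarrow(2)\Rightarrow(3)\Rightarrow(1)$ (or equivalently as two separate biconditionals), drawing each arrow from a single previously proved proposition.

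For $(1)\Rightarrow(2)$, I would invoke Proposition \ref{12}: it states precisely that, under the positive Schur property, every demi Dunford--Pettis operator on $E$ is uo-demi Dunford--Pettis. For $(2)\Rightarrow(3)$, I would invoke the ``only if'' direction of Proposition \ref{rel1}, which again requires only the positive Schur property and gives the passage from uo-demi Dunford--Pettis to un-demi Dunford--Pettis. For $(3)\Rightarrow(2)$, I would invoke the ``if'' direction of Proposition \ref{rel1}; note that this direction, as stated and proved, also uses order continuity of $E$, but since the positive Schur property on a Banach lattice implies order continuity (the norm cannot contain a lattice copy of $\ell^\infty$ under the positive Schur hypothesis), the hypothesis suffices. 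Finally, for $(2)\Rightarrow(1)$, I would cite Proposition \ref{11}, which needs no hypothesis on $E$ at all.

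Putting these arrows together yields the three equivalences. There is essentially no obstacle: the only point that might require a brief justification is the use of order continuity inside the application of Proposition \ref{rel1}, which I would dispatch with a one-line remark that positive Schur forces order continuity. The proof would therefore be very short, consisting of four citations of earlier propositions arranged in a cycle, and I would not repeat any of their sequence-level arguments.
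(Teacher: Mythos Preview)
Your proposal is correct and matches the paper's own approach: the paper's proof is the single line ``Follows from Propositions \ref{12} and \ref{rel1},'' and your plan unpacks exactly those citations into the cycle $(1)\Leftrightarrow(2)\Leftrightarrow(3)$. If anything you are more careful than the paper, since you also name Proposition \ref{11} for $(2)\Rightarrow(1)$ and you explicitly note that the positive Schur property forces order continuity so that both directions of Proposition \ref{rel1} are available; the paper leaves these implicit.
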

	\begin{proof}
		Follows from Propositions \ref{12} and  \ref{rel1}. 
		
	\end{proof}

	\section{Structure} \label{sec5}
	
	Our next concern is to study stability of the  uo-demi Dunford-Pettis operators under the taking of composition with an arbitrary bounded operator. 
	
	We observe through the following example, that the composition is not always stable in the set of uo-demi Dunford-Pettis operators.
	\begin{example} \label{EX1}
		
		We consider the  operators  $T=\frac{1}{2}Id_{L^{1}[0.1]}$ and $S=2Id_{L^{1}[0.1]}$.  Proposition \ref{22} asserts that both $T$ and $S$ are uo-demi Dunford-Pettis. However, using Proposition \ref{SC} the product operator $S\circ T=2Id_{L^{1}[0.1]}\circ\frac{1}{2}Id_{L^{1}[0.1]}=Id_{L^{1}[0.1]}$ is not  uo-demi Dunford-Pettis.
	\end{example}
	
	Thereby, it is worth to describe a subclass of uo-demi Dunford-Pettis operators that is stable under composition. For this aim, we need to define new class of operators, that we call weak-to-unbounded order continuous (abb, weak-uo continuous) operators.
	
	\begin{definition}
		An operator  $T$ between two vector lattices $X$ and $Y$ is said to be weak-to-unbounded order continuous operator (abb, weak-uo continuous), if for any net $(x_{\alpha})$ in $X$ such that $x_{\alpha} {\overset{w}{\rightarrow}} 0$ in $Y$, we have $T(x_{\alpha}){\overset{uo}{\rightarrow}} 0$ in $Y$. We denote by $\mathcal{L}_{w-uo}(X,Y)$ the collection of weak-uo continuous operators. And  $T$ is said to be $\sigma$-weak-to-unbounded order continuous operator (abb, $\sigma$-weak-uo continuous), if for any sequence $(x_n)$ in $X$ such that $x_{n} {\overset{w}{\rightarrow}} 0$ in $X$, we have $T(x_n){\overset{uo}{\rightarrow}} 0$ in $Y$. We denote by $\mathcal{L}_{\sigma-w-uo}(X,Y)$ the collection of $\sigma$-weak-uo continuous operators.
		
	\end{definition}
	The following result gives a necessary and sufficient condition under which each operator is weak-uo continuous.
	
	\begin{proposition} \label{w-uo-c}
		For a Banach lattice $E$, the following statements are equivalent;
		\begin{enumerate}
			\item Every operator $T:X\longrightarrow E$ from  an arbitrary Banach space $X$ into $E$ is weak-uo continuous.
			\item  The identity operator  $Id_{E}:E\longrightarrow E$ is weak-uo continuous.
			\item The linear span of the minimal ideals in $E$ is order dense in $E$.
		\end{enumerate}
	\end{proposition}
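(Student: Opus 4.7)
I would reduce the entire equivalence to the core $(2)\Leftrightarrow(3)$, since $(1)\Leftrightarrow(2)$ is formal. The implication $(1)\Rightarrow(2)$ is trivial because $Id_E$ is itself an operator from $E$ into $E$. For $(2)\Rightarrow(1)$, any bounded operator $T:X\longrightarrow E$ is weak-to-weak continuous, so a weakly null net $(x_\alpha)\subset X$ is sent to a weakly null net $(T(x_\alpha))\subset E$; applying $(2)$ to this image net yields $T(x_\alpha){\overset{uo}{\rightarrow}}0$ in $E$, and therefore $T$ is weak-uo continuous.

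For $(3)\Rightarrow(2)$, recall that condition $(3)$ is precisely the classical definition of $E$ being atomic, so I would fix a complete disjoint system of atoms $\{a_i\}_{i\in I}$ with associated positive band projections $P_i(x)=\lambda_i(x)a_i$, where each $\lambda_i$ is a continuous positive functional on $E$. Let $(x_\alpha)\subset E$ be a weakly null net. Continuity of each $\lambda_i$ gives $\lambda_i(x_\alpha)\to 0$. To obtain $x_\alpha{\overset{uo}{\rightarrow}}0$, fix $y\in E^+$, set $z_\alpha:=|x_\alpha|\wedge y\le y$, and observe that $P_i(z_\alpha)=(|\lambda_i(x_\alpha)|\wedge\lambda_i(y))a_i\to 0$ at each atomic coordinate. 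The order-density of the linear span of atoms then upgrades this coordinate-wise vanishing to order convergence of $z_\alpha$ to $0$; this is the point where I would invoke the atomic-lattice characterization of uo-convergence from \cite{GTX,DOT}.

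For $(2)\Rightarrow(3)$ I would argue contrapositively: if the linear span of minimal ideals is not order dense, then $E$ carries a nontrivial atomless band $B$, and inside $B$ the standard halving/Rademacher construction produces a sequence $(r_n)\subset B$ with $|r_n|=y_0$ for some fixed $0<y_0\in B$ and $r_n{\overset{w}{\rightarrow}}0$ in $E$. Since $|r_n|\wedge y_0=y_0$ does not order converge to $0$, the sequence $(r_n)$ is weakly null but not uo-null, so $Id_E$ fails to be weak-uo continuous. The main obstacle I anticipate is the coordinate-to-order upgrading in $(3)\Rightarrow(2)$: going from $\lambda_i(x_\alpha)\to 0$ on each atom to genuine order convergence of the truncations $|x_\alpha|\wedge y$ is delicate for nets, since weak convergence of nets carries no automatic norm boundedness and tail suprema need not exist without further Dedekind-completeness hypotheses; the Rademacher construction in $(2)\Rightarrow(3)$ is standard, but care is required to verify genuine weak (not merely atomic-coordinate) nullity of $(r_n)$ in all of $E$.
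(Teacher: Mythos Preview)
The paper's own proof is a one-line citation: the equivalence $(2)\Leftrightarrow(3)$ is exactly Theorem~1 of Wickstead \cite{wk}, and $(1)\Leftrightarrow(2)$ is the ``from the definition'' part, which you handle correctly via weak-to-weak continuity of bounded operators.

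Your attempt to prove $(2)\Leftrightarrow(3)$ directly, rather than quote Wickstead, is a genuinely different route, and the obstacles you flag are real --- in fact they are essentially the content of Wickstead's theorem. Two specific issues deserve emphasis. First, for $(3)\Rightarrow(2)$ you invoke the atomic characterisation of uo-convergence from \cite{GTX,DOT}, but those results are stated for sequences and/or under order continuity of the norm; passing from coordinate-wise vanishing to order convergence of $|x_\alpha|\wedge y$ for an arbitrary \emph{net} in an arbitrary Banach lattice needs exactly the kind of Dedekind-completeness or dominating-net argument that you correctly worry about, and this is where Wickstead's proof does nontrivial work. Second, and more seriously, your $(2)\Rightarrow(3)$ via a Rademacher-type sequence is the argument the paper uses in the \emph{next} proposition (the order-continuous case, citing \cite{WIC}), but it does not suffice here: weak-uo continuity is defined for \emph{nets}, so to contradict $(2)$ you must exhibit a weakly null net that is not uo-null, and in a general Banach lattice the Rademacher sequence in an atomless band need not be weakly null (think of $L^\infty[0,1]$, whose dual contains purely finitely additive measures). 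Wickstead's original argument produces a genuinely weakly null \emph{net} in the atomless part, which is why the paper defers to \cite{wk} rather than to the sequence construction of \cite{WIC}.

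In short: your $(1)\Leftrightarrow(2)$ is fine and matches the paper; your sketch of $(2)\Leftrightarrow(3)$ is morally the right picture but, as written, would only establish the $\sigma$-version under an order-continuity hypothesis --- precisely the content of the paper's subsequent proposition --- and does not yet recover the general Banach-lattice statement, for which the citation to \cite{wk} is doing real work.
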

	\begin{proof}
		Follows from the definition and from Theorem 1 \cite{wk}.
	\end{proof}
	\begin{proposition} \label{w-uo-c}
		For an order continuous Banach lattice $E$ , the following statements are equivalent;
		\begin{enumerate}
			\item Every operator $T:X\longrightarrow E$ from  an arbitrary Banach space $X$ into $E$ is $\sigma$-weak-uo continuous.
			\item  The identity operator  $Id_{E}:E\longrightarrow E$ is weak-uo continuous.
			\item  $E^+$ is the norm closed convex hull of its extremal rays.
			\item $E$ is atomic.
		\end{enumerate}
	\end{proposition}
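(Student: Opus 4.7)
The plan is to establish the cycle $(4) \Rightarrow (2) \Rightarrow (1) \Rightarrow (4)$ together with the separate equivalence $(3) \Leftrightarrow (4)$. The main external tool is the preceding Proposition \ref{w-uo-c}, which identifies weak-uo continuity of $Id_E$ with order-density of the linear span of the minimal ideals of $E$; under the added hypothesis of order continuity, this order-density condition coincides with $E$ being atomic, since the minimal ideals of $E$ are precisely the one-dimensional ideals generated by atoms.

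For $(4) \Rightarrow (2)$, I would note that if $E$ is atomic then the linear span of its minimal ideals is order dense, and apply the preceding Proposition \ref{w-uo-c} to conclude $Id_E$ is weak-uo continuous. For $(2) \Rightarrow (1)$, given any bounded $T : X \to E$ and a weakly null sequence $(x_n)$ in $X$, the image $(T(x_n))$ is weakly null in $E$ by the weak-to-weak continuity of bounded operators; viewed as a net, (2) then yields $T(x_n) \overset{uo}{\rightarrow} 0$, so $T \in \mathcal{L}_{\sigma-w-uo}(X,E)$. For $(3) \Leftrightarrow (4)$, I would appeal to the classical fact that in any Archimedean Banach lattice the extremal rays of $E^+$ are exactly the rays through atoms, so $E^+$ being the norm closed convex hull of its extremal rays is precisely the statement that the positive combinations of atoms are norm-dense in $E^+$; under order continuity this is equivalent to $E$ being atomic.

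The main obstacle will be $(1) \Rightarrow (4)$. Specializing (1) to $T = Id_E$ forces every weakly null sequence in $E$ to be uo-null, and the task is to conclude that $E$ is atomic. I would argue by contrapositive: if $E$ were non-atomic, choose a positive $u > 0$ whose principal band $B_u$ is atomless; then $B_u$ is itself an atomless order continuous Banach lattice with weak unit $u$, and the Kakutani--Maharam functional representation permits the construction of a Rademacher-type sequence $(r_n) \subset [-u, u]$ in $B_u$ which is weakly null but for which $|r_n| \wedge u$ does not tend to $0$ in order, contradicting the hypothesis on $Id_E$. Alternatively, this step can be bypassed by citing the known characterization that an order continuous Banach lattice is atomic if and only if its weakly null sequences are uo-null, which can be extracted from the machinery around Theorem 3.7 of \cite{GTX} and Theorem 1 of \cite{wk}.
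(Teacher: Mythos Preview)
Your proposal is correct and the underlying ideas coincide with the paper's, but the organization differs. The paper runs the cycle $(1)\Rightarrow(2)\Rightarrow(3)\Rightarrow(4)\Rightarrow(1)$: it gets $(2)\Rightarrow(3)$ from Theorem~4 of \cite{wk}, then $(3)\Rightarrow(4)$ by citing Theorem~2.2 of \cite{WIC} (Chen--Wickstead) to produce, in any non-atomic $E$, a weakly null sequence with $|x_n|=x>0$---exactly the Rademacher-type obstruction you sketch for $(1)\Rightarrow(4)$---and finally $(4)\Rightarrow(1)$ via Proposition~6.2 and Theorem~5.3 of \cite{GTX}. You instead run $(4)\Rightarrow(2)\Rightarrow(1)\Rightarrow(4)$ and handle $(3)\Leftrightarrow(4)$ separately through the identification of extremal rays with atoms. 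Your $(4)\Rightarrow(2)$ leans on the preceding Proposition about minimal ideals, which the paper does not use here; conversely, you do not need Theorem~4 of \cite{wk}. One small payoff of your ordering is that $(2)\Rightarrow(1)$ is genuinely immediate (bounded operators are weak--weak continuous and a sequence is a net), whereas the paper's ``obvious'' for $(1)\Rightarrow(2)$ silently asks the net statement to follow from the sequential one.
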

	\begin{proof}
		$(1)\Rightarrow (2)$ Obvious.
		
		$(2)\Rightarrow (3)$ Since $E$ is order continuous,	the statement follows  from Theorem 4 \cite{wk}.

		$(3)\Rightarrow (4)$ Assume by way of contradiction that $E$ is not atomic, then by Theorem 2.2 \cite{WIC},  for $x\in E^{+}$, there
		exists a sequence $(x_n)$ such that $x_{n} {\overset{w}{\rightarrow}} 0$ yet $|x_n| = x$ for all $n$. It is clear that $x_{n} {\overset{uo}{\nrightarrow}} 0$, this contradicts the second statement of Theorem 4 \cite{wk}.	
		
		$(3)\Rightarrow (4)$ Let $T:X\longrightarrow E$ be an operator from  a Banach space $X$ into $E$, and let $(x_n)$ be a sequence in $E$ such that $x_{n} {\overset{w}{\rightarrow}} 0$, then  $T(x_{n}) {\overset{w}{\rightarrow}} 0$ by weak continuity of $T$, now as $E$ is atomic and order continuous, Proposition 6.2 and Thoerem 5.3 of \cite{GTX} implie $T(x_{n}) {\overset{uo}{\rightarrow}} 0$, which proves that $T$ is $\sigma$-weak-uo continuous.
	\end{proof}
	
	
	
	The following is an important result about  $C(K)$ the
	space of all real-valued continuous functions on $K$ found in \cite{wk}.
	\begin{proposition} [Prop. 2 \cite{wk}] \label{CK}
		Let $K$ be a compact quasi-Stonian space. Every sequence
		in $C(K)$ which converges weakly to $0$ must uo-converge to $0$.
	\end{proposition}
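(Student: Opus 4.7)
My plan is to exploit the Dedekind $\sigma$-completeness of $C(K)$ --- an equivalent formulation of $K$ being quasi-Stonian --- in order to build, for each positive $g \in C(K)$, a decreasing sequence of lattice envelopes of $\bigl(|f_n|\wedge g\bigr)_n$ whose order infimum I can force to vanish.

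First I would observe that a weakly null sequence $(f_n)\subset C(K)$ is norm-bounded and satisfies $f_n(x)\to 0$ for every $x\in K$, by testing against the point masses $\delta_x\in C(K)^{\ast}$. Fixing $g\in C(K)^{+}$ and setting $g_n=|f_n|\wedge g$, one has $0\leq g_n\leq g$ and $g_n(x)\to 0$ pointwise. By Dedekind $\sigma$-completeness the lattice suprema
\[
u_k=\sup_{n\geq k}g_n,\qquad k\geq 1,
\]
exist in $C(K)$, and since $(u_k)$ decreases and is bounded below by $0$ we get $u_k\downarrow u$ in $C(K)$ for some $u\in C(K)^{+}$.

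The heart of the argument is to show that $u=0$. I would invoke the structural fact that in a quasi-Stonian compact space, the lattice supremum in $C(K)$ of a bounded countable family of continuous functions coincides with its pointwise supremum on an open dense subset of $K$. For each $k$ this supplies an open dense set $V_k\subset K$ where $u_k(x)$ equals the pointwise $\sup_{n\geq k}g_n(x)$; by the Baire category theorem, $V=\bigcap_k V_k$ is dense in $K$, and on $V$
\[
u(x)\leq \inf_k u_k(x)=\inf_k \sup_{n\geq k} g_n(x)=\limsup_{n\to\infty} g_n(x)=0.
\]
Continuity of $u$ combined with density of $V$ then forces $u\equiv 0$. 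Once $u_k\downarrow 0$ is available, the inequalities $0\leq g_n\leq u_n\downarrow 0$ give $g_n\xrightarrow{o}0$, that is, $|f_n|\wedge g\xrightarrow{o}0$ in $C(K)$. Since $g\in C(K)^{+}$ was arbitrary, this means exactly $f_n\xrightarrow{uo}0$.

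The main obstacle will be the pointwise-equals-lattice identification on a dense open set; it rests on the nontrivial topological property that closures of open $F_\sigma$ sets in $K$ are open, and this is the feature that makes countable-level lattice computations in $C(K)$ tractable. An alternative route would bypass pointwise considerations and instead work with the band of order-continuous (normal) functionals in $C(K)^{\ast}$, showing $\langle u,\mu\rangle=0$ for every such $\mu$ via dominated convergence; however this path similarly depends on fine structure of $C(K)$ and its dual, so the quasi-Stonian hypothesis remains essential.
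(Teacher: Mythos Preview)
The paper does not furnish its own proof of this proposition; it is quoted verbatim from Wickstead \cite{wk} (Proposition~2 there) and used as a black box. Your argument is correct and follows the standard route: weak convergence in $C(K)$ forces pointwise convergence via the Dirac functionals; Dedekind $\sigma$-completeness of $C(K)$ (equivalent to $K$ being quasi-Stonian) lets you form the tails $u_k=\sup_{n\ge k}(|f_n|\wedge g)$; and the identification of the lattice supremum with the pointwise supremum off a meagre set, combined with the Baire category theorem, drives the order infimum of the $u_k$ to zero, whence $|f_n|\wedge g\xrightarrow{o}0$. This is essentially how Wickstead argues in the cited source, so there is no genuine divergence to report.
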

	As consequence of the preceeding proposition, we bring an example of  $\sigma$-weak-uo continuous operators.
	\begin{proposition}
		If $K$ is a compact quasi-Stonian space. Any operator  $T:X\longrightarrow C(K)$ from  an arbitrary Banach space $X$ into $C(K)$ is $\sigma$-weak-uo continuous.
	\end{proposition}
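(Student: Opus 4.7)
The plan is to chain two standard facts: the weak-to-weak continuity of any bounded operator, and the property of $C(K)$ for compact quasi-Stonian $K$ supplied by Proposition \ref{CK}. The statement will follow almost immediately without any hard work.

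First I would start with a sequence $(x_n)$ in the arbitrary Banach space $X$ satisfying $x_n \xrightarrow{w} 0$. Since $T : X \longrightarrow C(K)$ is a bounded linear operator, it is weak-to-weak continuous, so one deduces $T(x_n) \xrightarrow{w} 0$ in $C(K)$.

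Next I would invoke Proposition \ref{CK} (i.e., Prop.~2 of \cite{wk}), which states precisely that in $C(K)$ for $K$ compact and quasi-Stonian, every weakly null sequence is uo-null. Applying this to the sequence $T(x_n)$ yields $T(x_n) \xrightarrow{uo} 0$ in $C(K)$, which is exactly the defining property of a $\sigma$-weak-uo continuous operator.

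There is really no obstacle here; the entire content is absorbed into Proposition \ref{CK}. The only care needed is to confirm the use of sequences (not nets) in the definition of $\sigma$-weak-uo continuity, so that Proposition \ref{CK} applies directly. Since that proposition is formulated for sequences, the argument goes through verbatim and the proof is essentially a two-step composition.
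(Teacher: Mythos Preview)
Your proposal is correct and follows exactly the same route as the paper: the paper's proof consists of the single line ``Follows from Proposition~\ref{CK}'', and your argument simply unpacks that reference by combining weak-to-weak continuity of $T$ with the weakly-null-implies-uo-null property of $C(K)$ for quasi-Stonian $K$.
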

	
	\begin{proof}
		Follows from Proposition \ref{CK}.
	\end{proof}


	
	
	
	Next, we exhibit a result that deals with the  composition in the class of uo-demi Dunford-Pettis  operators.
	\begin{proposition} \label{composition}
		Let $E$ be a Banach lattice, and let  $T, S : E\longrightarrow E$ be two operators. The following statements hold:
		\begin{enumerate}
			\item If $ S \in \mathcal{L}_{\sigma-w-uo}(E)$, then $S$ is uo-demi Dunford-Pettis if and only if   $S\circ T$ is uo-demi Dunford-Pettis.
			\item If both  $T, S \in \mathcal{L}_{\sigma-w-uo}(E)$, then  $T$ is uo-demi Dunford-Pettis if and only if  $S\circ T$ is uo-demi Dunford-Pettis.
			
		\end{enumerate}
		
	\end{proposition}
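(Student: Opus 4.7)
The plan is to exploit two facts in combination: uo-convergence is linear (so sums and differences of uo-null sequences are uo-null), and any bounded operator is weak-to-weak continuous. The central observation I would state up front is this: if $R \in \mathcal{L}_{\sigma-w-uo}(E)$ and $(x_n)$ is a weakly null sequence in $E$, then $R(x_n) \overset{uo}{\rightarrow} 0$ directly; moreover, for any bounded $T$ the sequence $T(x_n)$ remains weakly null, so under the hypothesis $S \in \mathcal{L}_{\sigma-w-uo}(E)$ we also get $S(T(x_n)) \overset{uo}{\rightarrow} 0$. With this in hand, each equivalence reduces to toggling the ``error term'' between $x_n - S(T(x_n))$, $x_n - S(x_n)$, and $x_n - T(x_n)$ by adding or subtracting uo-null correctors.

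For part (1), forward direction, I would take $(x_n)$ weakly null with $x_n - S(T(x_n)) \overset{uo}{\rightarrow} 0$ and observe that $S(T(x_n)) \overset{uo}{\rightarrow} 0$ by the central observation, so $x_n \overset{uo}{\rightarrow} 0$; since also $S(x_n) \overset{uo}{\rightarrow} 0$, subtracting yields $x_n - S(x_n) \overset{uo}{\rightarrow} 0$, and the assumption that $S$ is uo-demi Dunford--Pettis closes the argument. For the reverse direction, starting from $(x_n)$ weakly null with $x_n - S(x_n) \overset{uo}{\rightarrow} 0$, I would use $S(x_n) \overset{uo}{\rightarrow} 0$ to get $x_n \overset{uo}{\rightarrow} 0$, then subtract $S(T(x_n)) \overset{uo}{\rightarrow} 0$ to reach $x_n - S(T(x_n)) \overset{uo}{\rightarrow} 0$, after which the hypothesis that $S \circ T$ is uo-demi Dunford--Pettis yields $\|x_n\| \to 0$.

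Part (2) is symmetric but now both $T$ and $S$ promote weak-null input to uo-null output, so given $(x_n)$ weakly null, all three of $T(x_n)$, $S(x_n)$, $S(T(x_n))$ are uo-null. Consequently the two conditions $x_n - T(x_n) \overset{uo}{\rightarrow} 0$ and $x_n - S(T(x_n)) \overset{uo}{\rightarrow} 0$ are equivalent once $x_n \overset{uo}{\rightarrow} 0$ has been established, and either one follows from the other by adding the appropriate uo-null correction. Applying whichever of $T$ or $S \circ T$ is assumed to be uo-demi Dunford--Pettis concludes the proof. I do not anticipate a real obstacle: the whole argument is an exercise in bookkeeping with linearity of uo-convergence, and the only delicate point is to make sure the chain ``$x_n \overset{w}{\rightarrow} 0 \Rightarrow T(x_n) \overset{w}{\rightarrow} 0$'' (by mere boundedness of $T$) is invoked before activating $\sigma$-weak-uo continuity of $S$.
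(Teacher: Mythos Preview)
Your proposal is correct and follows essentially the same approach as the paper. The only cosmetic difference is that the paper applies the $\sigma$-weak-uo continuity of $S$ directly to the single weakly null sequence $T(x_n)-x_n$ and uses the identity $x_n-S(x_n)=\bigl(x_n-S\circ T(x_n)\bigr)+S\bigl(T(x_n)-x_n\bigr)$, whereas you pass through the intermediate conclusion $x_n\overset{uo}{\to}0$ by treating $S(T(x_n))$ and $S(x_n)$ separately; the underlying mechanism (linearity of uo-convergence plus weak-to-weak continuity of bounded operators feeding into $\sigma$-weak-uo continuity) is identical.
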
 
	\begin{proof}
		
		For $(1)$ The “only if” part, let $(x_n)$ be a sequence in $E$ such that $x_{n} {\overset{w}{\rightarrow}} 0$ and $x_n-S\circ T(x_n){\overset{uo}{\rightarrow}} 0$. We have that  \begin{equation} \label{equa1}
			x_n- S(x_n)= x_n-S\circ T(x_n)+S[T(x_n)-x_n].
		\end{equation}   Since
		$x_n-T(x_n){\overset{w}{\rightarrow}} 0$ and $S$ is $\sigma$-weak-uo continuous, it follows that $S[T(x_n)-x_n]{\overset{uo}{\rightarrow}} 0$, so from equation \ref{equa1} we have  $x_n-S(x_n){\overset{uo}{\rightarrow}} 0$, the fact that $S$ is  uo-demi Dunford-Pettis yields $\|x_{n}\|\rightarrow 0$, which shows that  $S\circ T$ is uo-demi Dunford-Pettis. For
		the “if” part, we suppose that $x_{n} {\overset{w}{\rightarrow}} 0$ and $x_n-S(x_n){\overset{uo}{\rightarrow}} 0$, using again equation \ref{equa1} we obtain $x_n-S\circ T(x_n){\overset{uo}{\rightarrow}} 0$, and since $S\circ T$ is uo-demi Dunford-Pettis, then $\|x_{n}\|\rightarrow 0$, which proves that $S$ is uo-demi Dunford-Pettis.
		
		For $(2)$  The “only if” part, let $(x_n)$ be a sequence in $E$ such that $x_{n} {\overset{w}{\rightarrow}} 0$ and $x_n-S\circ T(x_n){\overset{uo}{\rightarrow}} 0$. Now, we have that \begin{equation} \label{equa2}
			x_n- T(x_n)= x_n-S\circ T(x_n)+S\circ T(x_n)-T(x_n).
		\end{equation}   Since $x_n{\overset{w}{\rightarrow}} 0$, and
		$T(x_n){\overset{w}{\rightarrow}} 0$ and from the fact that $T, S \in \mathcal{L}_{\sigma-w-uo}(E)$. We deduce from equation \ref{equa2} that  $x_n-T(x_n){\overset{uo}{\rightarrow}} 0$. As $T$ is uo-demi Dunford-Pettis, it follows that $\|x_{n}\|\rightarrow 0$, which shows that  $S\circ T$ is uo-demi Dunford-Pettis. For
		the “if” part, we suppose that $x_{n} {\overset{w}{\rightarrow}} 0$ and $x_n-T(x_n){\overset{uo}{\rightarrow}} 0$, using again equation \ref{equa2} we obtain $x_n-S\circ T(x_n){\overset{uo}{\rightarrow}} 0$, and since $S\circ T$ is uo-demi Dunford-Pettis, then $\|x_{n}\|\rightarrow 0$, which proves that $T$ is uo-demi Dunford-Pettis.
		
	\end{proof}
	
	\begin{remark}
		The weak-uo continuity is essential in Theorem \ref{composition}. Indeed, back to Example \ref{EX1} and considering the operators  $T$ and $S$. We have that $S$ is  uo-demi Dunford-Pettis, but $S$ fails to be $\sigma$-weak-uo continuous, otherwise Proposition \ref{w-uo-c} asserts that $L^{1}[0.1]$ is atomic, which is impossible.  On the other hand,  the  operator $S\circ T$ is not  uo-demi Dunford-Pettis.
	\end{remark}

	The class of uo-demi Dunford-Pettis operators lacks the vector space structure not only
	with the sum but also with the external product. We concretely see that throught the next example:

	\begin{remark} 
		
		\begin{itemize}
			\item[(i)]  Proposition \ref{22} states that $-Id_{L^{1}[0,1]}$  is uo-demi Dunford–Pettis. Howerver,
			the identity operator $Id_{L^{1}[0,1]}$ is not uo-demi Dunford–Pettis.
			
			\item[(ii)] 	Let  consider the following operators $T=-Id_{L^{1}[0,1]}$ and $S=2Id_{L^{1}[0,1]}$. According to Proposition \ref{22}  both $T$ and $S$  are  uo-demi Dunford-Pettis, but the sum $T+S=Id_{L^{1}[0,1]}$ is not  uo-demi Dunford-Pettis. Otherwise, by Proposition  \ref{SC}  $L^{1}[0,1]$ will have the Schur property, which is false.

		\end{itemize}
	\end{remark}
	We establish  the following important result:
	
	\begin{proposition} \label{multi}
		Let $E$ be a Banach lattice and $T:E\longrightarrow E$ an operator. If $T\in \mathcal{L}^{uo}_{ddp}(E)\cap \mathcal{L}_{\sigma-w-uo}(E)$, then  $\alpha T\in \mathcal{L}^{uo}_{ddp}(E)$ for all  $\alpha\in\mathbb{R}$.
	\end{proposition}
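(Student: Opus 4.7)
The plan is to verify the definition directly. Let $(x_n)$ be a sequence in $E$ with $x_n \xrightarrow{w} 0$ and $x_n - \alpha T(x_n) \xrightarrow{uo} 0$; the goal is to deduce $\|x_n\| \to 0$. The strategy is to use the $\sigma$-weak-uo continuity of $T$ to bridge from the hypothesis on $\alpha T$ to the hypothesis on $T$ itself.

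First I would observe that since $x_n \xrightarrow{w} 0$ and $T \in \mathcal{L}_{\sigma-w-uo}(E)$, we have $T(x_n) \xrightarrow{uo} 0$, and therefore $\alpha T(x_n) \xrightarrow{uo} 0$ as well (scalar multiplication clearly preserves uo-convergence). Writing
\begin{equation*}
x_n \;=\; \bigl(x_n - \alpha T(x_n)\bigr) + \alpha T(x_n),
\end{equation*}
we see that $x_n$ is a sum of two uo-null sequences, so $x_n \xrightarrow{uo} 0$.

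Next, combining $x_n \xrightarrow{uo} 0$ with $T(x_n) \xrightarrow{uo} 0$ gives $x_n - T(x_n) \xrightarrow{uo} 0$. Since $x_n \xrightarrow{w} 0$ and $T \in \mathcal{L}^{uo}_{ddp}(E)$, applying the definition of uo-demi Dunford--Pettis operator to $T$ itself yields $\|x_n\| \to 0$. This shows that $\alpha T$ is uo-demi Dunford--Pettis for every $\alpha \in \mathbb{R}$, including $\alpha = 0$ (where the argument degenerates but remains valid, since the hypothesis that $T$ is both uo-demi Dunford--Pettis and $\sigma$-weak-uo continuous forces $E$ to have the positive Schur property by the same reasoning applied to the sequence $x_n$).

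There is no real obstacle here; the content of the proposition is just that $\sigma$-weak-uo continuity absorbs the scalar $\alpha$ into the uo-null part, so the assumption on $\alpha T$ collapses back to the assumption on $T$. The only subtle point to flag is the $\alpha = 0$ case, but it is covered by the same three-line chain.
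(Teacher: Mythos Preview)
Your proof is correct and follows essentially the same approach as the paper's: use $\sigma$-weak-uo continuity of $T$ on the weakly null sequence $(x_n)$ to obtain $T(x_n)\xrightarrow{uo}0$, deduce $x_n-T(x_n)\xrightarrow{uo}0$, and then invoke the uo-demi Dunford--Pettis property of $T$. The only cosmetic difference is the decomposition: the paper writes $x_n-T(x_n)=(x_n-\alpha T(x_n))+T((\alpha-1)x_n)$ directly, whereas you first pass through $x_n\xrightarrow{uo}0$; both routes are equivalent. Your parenthetical remark about $\alpha=0$ and the positive Schur property is a true side observation but unnecessary---the main argument already covers every $\alpha$ uniformly.
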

	
	\begin{proof}
		Let $(x_n)$ be a sequence of $E$ such that $x_{n} {\overset{w}{\rightarrow}} 0$ and $x_n-\alpha T(x_n){\overset{uo}{\rightarrow}} 0$, for all  $\alpha\in\mathbb{R}$. We have that $x_n-T(x_n)=x_n-\alpha T(x_n)+ \alpha T(x_n)-T(x_n)=x_n-\alpha T(x_n)+T((\alpha-1)x_n)$. $T$ is $\sigma$-weak-uo continuous, hence $T((\alpha-1)x_n){\overset{uo}{\rightarrow}} 0$, and therefore $x_n-T(x_n){\overset{uo}{\rightarrow}} 0$. Now, since $T$ is uo-demi Dunford-Pettis, it follows that  $\|x_{n}\|\rightarrow 0$. Thus, $\alpha T\in \mathcal{L}^{uo}_{ddp}(E)$	
	\end{proof}
	

	The	next result shows  that a $\sigma$-weak-uo continuous operator perturbation of an uo-demi Dunford-Pettis operator is  uo-demi Dunford-Pettis.
	
	\begin{proposition}
		Let $E$ be a Banach lattice and  $T,S: E\longrightarrow E$ two operators such that $S$ is $\sigma$-weak-uo continuous. Then, the sum $T+S$ is uo-demi Dunford-Pettis if and only if $T$ is uo-demi Dunford-Pettis.
	\end{proposition}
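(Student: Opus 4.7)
The plan is to mimic the argument of Proposition \ref{multi}, exploiting the fact that uo-convergence is linear (the sum of two uo-null sequences is uo-null, since $|a+b|\wedge y \leq |a|\wedge y+|b|\wedge y$ for any $y\in E^{+}$). The algebraic identity that powers both directions is
\[
x_n-T(x_n)=\bigl(x_n-(T+S)(x_n)\bigr)+S(x_n).
\]
So whenever we control the uo-behaviour of $S(x_n)$ (which is where $\sigma$-weak-uo continuity of $S$ comes in), we can freely switch between $(x_n-T(x_n))$ being uo-null and $(x_n-(T+S)(x_n))$ being uo-null, along any weakly null sequence $(x_n)$.

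For the ``if'' direction I would assume $T$ is uo-demi Dunford--Pettis and take a sequence $(x_n)$ in $E$ with $x_n\overset{w}{\rightarrow}0$ and $x_n-(T+S)(x_n)\overset{uo}{\rightarrow}0$. Because $S\in\mathcal{L}_{\sigma\text{-}w\text{-}uo}(E)$ and $x_n\overset{w}{\rightarrow}0$, the sequence $S(x_n)$ is uo-null. Adding this uo-null sequence to $(x_n-(T+S)(x_n))$ and using linearity of uo-convergence yields $x_n-T(x_n)\overset{uo}{\rightarrow}0$. Applying the hypothesis on $T$ gives $\|x_n\|\rightarrow 0$, which is exactly what is required for $T+S$ to be uo-demi Dunford--Pettis.

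For the ``only if'' direction I would run the same identity in reverse: suppose $T+S$ is uo-demi Dunford--Pettis, and let $x_n\overset{w}{\rightarrow}0$ with $x_n-T(x_n)\overset{uo}{\rightarrow}0$. Again $S(x_n)\overset{uo}{\rightarrow}0$ by $\sigma$-weak-uo continuity of $S$, so subtracting shows $x_n-(T+S)(x_n)\overset{uo}{\rightarrow}0$. The hypothesis on $T+S$ then gives $\|x_n\|\rightarrow 0$, proving $T$ is uo-demi Dunford--Pettis.

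There is really no substantive obstacle here; the only thing to be careful about is invoking linearity of uo-convergence (which is standard) and checking that the weakly null hypothesis on $(x_n)$ is preserved throughout, so that $\sigma$-weak-uo continuity of $S$ can legitimately be applied. The proof is essentially a restatement of the identity above combined with a direct appeal to the definitions, and in length should not exceed a few lines.
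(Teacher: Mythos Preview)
Your proposal is correct and matches the paper's own proof essentially line for line: both use the identity $x_n-T(x_n)=\bigl(x_n-(T+S)(x_n)\bigr)+S(x_n)$, invoke $\sigma$-weak-uo continuity of $S$ on the weakly null sequence $(x_n)$ to get $S(x_n)\overset{uo}{\rightarrow}0$, and then appeal to the uo-demi Dunford--Pettis hypothesis on $T$ (resp.\ $T+S$) to conclude. The only addition you make is the explicit justification of linearity of uo-convergence via $|a+b|\wedge y\le |a|\wedge y+|b|\wedge y$, which the paper leaves implicit.
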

	
	\begin{proof}
		For the "only if" part, assume that $T+S$ is uo  demi Dunford-Pettis, and show that $T$ is uo-demi Dunford-Pettis. To this end, let $(x_n)$ be a sequence of $E$ such that $x_{n} {\overset{w}{\rightarrow}} 0$ and $x_n-T(x_n){\overset{uo}{\rightarrow}} 0$, we have that \begin{equation} \label{eq1}
			x_{n}-[T+S](x_{n})=x_n-T(x_n)-S(x_n)
		\end{equation}
		$S$ is weak-uo continuous, hence $S(x_{n}) {\overset{uo}{\rightarrow}} 0$, and therefore $ x_{n}-[T+S](x_{n}){\overset{uo}{\rightarrow}} 0$. Now, by hypothesis $T+S$ is uo-demi Dunford-Pettis, hence $\|x_{n}\|\rightarrow 0$, this shows that $T$ is  uo-demi Dunford–Pettis.
		
		For the "if" part, assume that  $T$ is uo-demi Dunford–Pettis, and let $(x_n)$ be a sequence of $E$ such that $(x_n){\overset{w}{\rightarrow}} 0$ and $x_{n}-[T+S](x_{n}) {\overset{uo}{\rightarrow}} 0$, we have from (\ref{eq1}) that $x_n-T(x_n)=x_{n}-[T+S](x_{n})+ S(x_{n})$. Since $S(x_{n}) {\overset{uo}{\rightarrow}} 0$ it follows that $x_n-T(x_n){\overset{uo}{\rightarrow}} 0$, and as $T$ is  uo-demi Dunford–Pettis we have $\|x_{n}\|\rightarrow 0$, therefore $T+S$ is  uo-demi Dunford–Pettis.
		
	\end{proof}
	
	As consequence, we obtain a sufficient condition for which sum of two uo-demi Dunford-Pettis operators is uo-demi Dunford-Pettis.
	\begin{corollary}
		Let $E$ be a Banach lattice and let $T,S\in \mathcal{L}^{uo}_{ddp}(E)$. If $S\in\mathcal{L}_{\sigma-w-uo}(E)$ or $T\in\mathcal{L}_{\sigma-w-uo}(E)$, then  the sum $T+S$ is  uo-demi Dunford–Pettis.	
	\end{corollary}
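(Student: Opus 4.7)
The plan is to reduce this corollary directly to the preceding proposition, which gave an ``if and only if'' characterization: under the assumption that one of the two summands is $\sigma$-weak-uo continuous, the sum $T+S$ is uo-demi Dunford--Pettis exactly when the other summand is uo-demi Dunford--Pettis. Since the corollary's hypothesis provides both a $\sigma$-weak-uo continuous summand and the uo-demi Dunford--Pettis property of the other, the conclusion will follow immediately once we split into the two symmetric cases.

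Concretely, I would proceed as follows. First, assume the case $S\in\mathcal{L}_{\sigma-w-uo}(E)$. Then the preceding proposition applies verbatim with this $S$: since $T\in\mathcal{L}^{uo}_{ddp}(E)$, the ``if'' direction of that proposition yields $T+S\in\mathcal{L}^{uo}_{ddp}(E)$. Next, assume instead that $T\in\mathcal{L}_{\sigma-w-uo}(E)$. Writing $T+S=S+T$ and swapping the roles of the two operators, the previous case (now with $T$ playing the role of the $\sigma$-weak-uo continuous summand and $S$ playing the role of the uo-demi Dunford--Pettis summand) gives $S+T\in\mathcal{L}^{uo}_{ddp}(E)$, hence $T+S\in\mathcal{L}^{uo}_{ddp}(E)$.

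There is essentially no obstacle here, since all the work was done in the proposition that immediately precedes the corollary; the only point to mention is commutativity of the sum, which is what legitimizes the symmetric second case. The proof therefore reduces to one sentence invoking the previous proposition in each of the two cases.
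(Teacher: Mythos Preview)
Your proposal is correct and matches the paper's approach exactly: the paper states this corollary without proof, introducing it simply with ``As consequence'' of the preceding proposition, so your reduction to that proposition in the two symmetric cases is precisely what is intended.
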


	\section{Power uo-demi Dunford–Pettis operators}
	In the following section, we will be focusing on power uo-demi Dunford–Pettis operators. But before, we  
	recall from \cite{OUC} that an operator $T: E\longrightarrow F$ between two Riesz spaces is said to be
	unbounded $\sigma$-order continuous (abb, u $\sigma$ o-continuous), if for sequence $(x_{n})$ of $E$ such that  $x_{n} {\overset{uo}{\rightarrow}} 0$ in $E$ implies $T(x_{n}) {\overset{uo}{\rightarrow}} 0$ in $F$.
	\begin{proposition}
		Let $E$ be a Banach lattice, and $T: E\longrightarrow E$ an u $\sigma$o-continuous operator. If $T^{2}$ is uo-demi Dunford–Pettis, then $T$ is uo-demi Dunford–Pettis.	
	\end{proposition}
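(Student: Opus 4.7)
The plan is to take an arbitrary weakly null sequence $(x_n)$ in $E$ with $x_n - T(x_n) \xrightarrow{uo} 0$ and to deduce $\|x_n\|\to 0$ by transferring the hypothesis to $T^2$ via a simple telescoping identity, so that the hypothesis on $T^{2}$ can be invoked.

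Concretely, I would begin from the algebraic identity
\[
x_n - T^{2}(x_n) \;=\; \bigl(x_n - T(x_n)\bigr) \;+\; T\bigl(x_n - T(x_n)\bigr),
\]
which holds by expanding the right-hand side. The first summand is uo-null by assumption. For the second summand, set $y_n := x_n - T(x_n)$; then $y_n \xrightarrow{uo} 0$, and the u$\sigma$o-continuity of $T$ yields $T(y_n) \xrightarrow{uo} 0$. Since the sum of two uo-null sequences is uo-null (because $|a_n+b_n|\wedge y \leq |a_n|\wedge y + |b_n|\wedge y$ for every $y\in E^{+}$, and uo-convergence is preserved under the order convergence of these pieces), we obtain $x_n - T^{2}(x_n) \xrightarrow{uo} 0$.

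Combined with $x_n \xrightarrow{w} 0$, the assumption that $T^{2}$ is uo-demi Dunford--Pettis now directly gives $\|x_n\| \to 0$, which is exactly the required conclusion that $T$ is uo-demi Dunford--Pettis.

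There is essentially no hard step here: the only potential pitfall is the verification that uo-convergence is closed under addition, which follows immediately from the lattice inequality above and the definition of uo-convergence; everything else is the identity together with a single application of u$\sigma$o-continuity and of the hypothesis on $T^{2}$.
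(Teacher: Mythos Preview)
Your proof is correct and follows essentially the same route as the paper: both use the telescoping identity $x_n - T^2(x_n) = (x_n - T(x_n)) + T(x_n - T(x_n))$, apply u$\sigma$o-continuity to the second summand, and then invoke the hypothesis on $T^2$. Your explicit justification that the sum of two uo-null sequences is uo-null (via the inequality $|a+b|\wedge y \le |a|\wedge y + |b|\wedge y$) is a welcome detail that the paper leaves implicit.
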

	
	\begin{proof}
		Let $(x_n)$ be a weakly null sequence in $E$ and $x_{n}-T(x_{n}) {\overset{uo}{\rightarrow}} 0$, since $T$ is u $\sigma$o-continuous,  we have $T(x_{n})-T^{2}(x_{n}) {\overset{uo}{\rightarrow}} 0$. Now, by the equality $ x_n-T^{2}(x_{n})= x_{n}-T(x_{n})+ T(x_{n})-T^{2}(x_{n}) $, we infer that $x_{n}-T^{2}(x_{n}) {\overset{uo}{\rightarrow}} 0$. Now, as $T^{2}$ is uo-demi Dunford–Pettis, it follows that  $\|x_{n}\|\rightarrow 0$, which proves that $T$ is  uo-demi Dunford–Pettis.
		
	\end{proof}

	Generalization of the previous result.
	
	\begin{proposition} \label{power}
		Let $E$ be a Banach lattice. If $T: E\longrightarrow E$ is an u $\sigma$o-continuous power uo-demi Dunford-Pettis operator, then $T$ is uo-demi Dunford–Pettis.	
	\end{proposition}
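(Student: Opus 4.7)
The plan is to mimic the telescoping argument used for $n=2$ in the previous proposition, but iterate it to handle an arbitrary power $n$ for which $T^n$ is assumed to be uo-demi Dunford--Pettis.

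First I would fix a weakly null sequence $(x_k)$ in $E$ with $x_k - T(x_k) \xrightarrow{uo} 0$, and write the telescoping identity
\begin{equation*}
x_k - T^{n}(x_k) \;=\; \sum_{i=0}^{n-1} \bigl(T^{i}(x_k) - T^{i+1}(x_k)\bigr) \;=\; \sum_{i=0}^{n-1} T^{i}\bigl(x_k - T(x_k)\bigr).
\end{equation*}
The goal is then to argue that each summand is uo-null, so that $x_k - T^{n}(x_k) \xrightarrow{uo} 0$, after which the assumption that $T^n$ is uo-demi Dunford--Pettis (together with $x_k \xrightarrow{w} 0$) yields $\|x_k\|\to 0$.

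The key intermediate step is to observe that $T^{i}$ is u$\sigma$o-continuous for every $i \geq 0$. This follows by an easy induction: $T^{0}=\mathrm{Id}_E$ is u$\sigma$o-continuous trivially, and if $T^{i}$ sends uo-null sequences to uo-null sequences then so does $T^{i+1} = T \circ T^{i}$, using the u$\sigma$o-continuity of $T$ itself. Applied to the uo-null sequence $(x_k - T(x_k))$, this gives $T^{i}(x_k - T(x_k)) \xrightarrow{uo} 0$ for each $0 \le i \le n-1$. Since a finite sum of uo-null sequences is uo-null, the telescoping identity then produces $x_k - T^{n}(x_k) \xrightarrow{uo} 0$, and the hypothesis on $T^n$ finishes the argument.

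The only subtlety I foresee is making precise what ``power uo-demi Dunford--Pettis'' means (i.e.\ that there exists some integer $n \ge 1$ with $T^n \in \mathcal{L}^{uo}_{ddp}(E)$); once this $n$ is fixed, every step above is a direct generalization of the $n=2$ case already recorded in the previous proposition, and no further ingredients are required. The u$\sigma$o-continuity hypothesis on $T$ is precisely what allows each term $T^{i}(x_k - T(x_k))$ to inherit uo-nullness, so the argument should go through cleanly without any additional assumption on $E$.
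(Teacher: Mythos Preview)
Your proposal is correct and follows essentially the same approach as the paper: both use the telescoping identity $x_k - T^{n}(x_k) = \sum_{i=0}^{n-1} T^{i}(x_k - T(x_k))$, invoke the u$\sigma$o-continuity of $T$ to conclude each summand is uo-null, and then apply the hypothesis that $T^{n}$ is uo-demi Dunford--Pettis. Your version is slightly more explicit in justifying by induction that every $T^{i}$ is u$\sigma$o-continuous, and you also clarify the meaning of ``power uo-demi Dunford--Pettis'' (existence of some $n\ge 1$ with $T^{n}\in\mathcal{L}^{uo}_{ddp}(E)$), which the paper states inside its proof.
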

	
	\begin{proof}
		Assume that $T$ is power uo-demi Dunford-Pettis, that is, if $T^m$ is uo-demi Dunford-Pettis for some $m\in\mathbb{N}$ and show that $T$ is uo-demi Dunford-Pettis. To this end, let $(x_n)$ be a  sequence of $E$ such that $x_n{\overset{w}{\rightarrow}} 0$  and $x_{n}-T(x_{n}) {\overset{uo}{\rightarrow}} 0$. Since, $T$ is u $\sigma$o-continuous, we can easily check  that  $T^{i}(x_{n})-T^{i+1}(x_{n}) {\overset{uo}{\rightarrow}} 0$ for all $i\in\{1,..,m\}$. On the other hand, from the equality 
		
		$$x_{n}-T^{m}(x_{n})=x_{n}-T(x_{n})+\sum_{i=1}^{m-1}T^{i}(x_{n})-T^{i+1}(x_{n})$$
		it follows that $x_{n}-T^{m}(x_{n}) {\overset{uo}{\rightarrow}} 0$. Now, as $T^m$ is uo-demi Dunford-Pettis, we deduce that $\|x_{n}\|\rightarrow 0$, and thus $T$ is  uo-demi Dunford–Pettis.
		
	\end{proof}
	
	\begin{remark}
		
		It is clear that every identity operator $Id_{E}: E\rightarrow E$ is u $\sigma$ o-continuous. we should note that, there exist u $\sigma$ o-continuous operators that are not $\sigma$-weak uo-continuous. Indeed, the identity operator of $\ell^\infty$  $Id_{\ell^\infty} :\ell^\infty\rightarrow \ell^\infty$ is u $\sigma$ o-continuous, but using Proposition \ref{w-uo-c} it fails to be $\sigma$-weak uo-continuous. This fact lead us to the next result important result which is a consequence of Proposition \ref{composition}.

		\begin{proposition}
			Let $E$ be a Banach lattice. If $T: E\longrightarrow E$ is $\sigma$-weak uo-continuous power uo-demi Dunford-Pettis operator, then $T$ is uo-demi Dunford–Pettis.	
		\end{proposition}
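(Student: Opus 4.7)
The plan is to reduce the statement directly to Proposition \ref{composition}(2) via the factorization $T^m=T^{m-1}\circ T$. If we can show that both factors belong to $\mathcal{L}_{\sigma-w-uo}(E)$, then part (2) of Proposition \ref{composition} (applied with ``$S$'' taken to be $T^{m-1}$ and ``$T$'' taken to be $T$) yields the equivalence: $T$ is uo-demi Dunford--Pettis if and only if $T^{m-1}\circ T=T^m$ is uo-demi Dunford--Pettis. Since by hypothesis $T^m$ is uo-demi Dunford--Pettis for some $m\in\mathbb{N}$, the conclusion follows immediately.

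The only auxiliary point to verify is therefore that every power of a $\sigma$-weak-uo continuous operator is again $\sigma$-weak-uo continuous. I would establish this by an easy induction on $k\ge 1$. For the inductive step, given a weakly null sequence $(x_n)$ in $E$, the bounded operator $T$ is automatically weak-to-weak continuous, so $T(x_n){\overset{w}{\rightarrow}}0$; applying the inductive hypothesis to the $\sigma$-weak-uo continuous operator $T^{k-1}$ on the weakly null sequence $T(x_n)$ gives $T^{k}(x_n)=T^{k-1}(T(x_n)){\overset{uo}{\rightarrow}}0$. Hence $T^{k}\in\mathcal{L}_{\sigma-w-uo}(E)$ for every $k\ge 1$, in particular $T^{m-1}\in\mathcal{L}_{\sigma-w-uo}(E)$.

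With this in hand the argument closes cleanly: both $T$ and $T^{m-1}$ lie in $\mathcal{L}_{\sigma-w-uo}(E)$, so Proposition \ref{composition}(2) applies to the composition $T^{m-1}\circ T=T^m$, and the uo-demi Dunford--Pettis property of $T^m$ transfers to $T$. There is no substantial obstacle here, since the real content is already contained in Proposition \ref{composition}(2); the novelty is just to recognize that $\sigma$-weak-uo continuity is preserved under composition (and hence under powers), which is the mild technical point requiring the remark that bounded operators are weakly continuous. The case $m=1$ is trivial, and for $m=2$ the argument reduces to a single application of Proposition \ref{composition}(2) with $S=T$.
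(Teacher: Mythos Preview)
Your proposal is correct and matches the paper's approach: the paper's proof consists of the single line ``Follows from Proposition~\ref{composition},'' and you have simply spelled out one way to carry this out, via part~(2) applied to the factorization $T^m=T^{m-1}\circ T$ together with the (correct) observation that $\sigma$-weak-uo continuity is stable under powers. Note, incidentally, that invoking part~(1) of Proposition~\ref{composition} with $S=T$ and the second operator equal to $T^{m-1}$ gives the same conclusion without needing to check that $T^{m-1}\in\mathcal{L}_{\sigma\text{-}w\text{-}uo}(E)$, so your inductive lemma can be bypassed.
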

		\begin{proof}
			Follows from Proposition \ref{composition}.
			
		\end{proof}
		
	\end{remark}
	
	We obtain also the following result:
	
	\begin{proposition}
		Let $E$ a Banach lattice and  $T: E\longrightarrow E$ an operator.
		If $T$ and $-T$ are both uo-demi Dunford-Pettis, then $T^2$ is uo-demi Dunford-Pettis operator.
	\end{proposition}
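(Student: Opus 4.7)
The plan is to prove $T^2$ is uo-demi Dunford--Pettis directly from the definition by cleverly splitting the sequence into two auxiliary sequences, one of which will witness uo-demi Dunford--Pettiness of $T$, the other uo-demi Dunford--Pettiness of $-T$. The key algebraic identity is the factorization
\begin{equation*}
x_n - T^2(x_n) = (I - T)(I + T)(x_n) = (I + T)(I - T)(x_n),
\end{equation*}
which suggests setting $y_n := x_n + T(x_n)$ and $z_n := x_n - T(x_n)$.

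Concretely, I would start with a weakly null sequence $(x_n)$ in $E$ such that $x_n - T^2(x_n)$ is uo-null, and verify two things for each of $y_n$ and $z_n$: weak convergence to $0$, and that the ``demi'' residual ($y_n - T(y_n)$, respectively $z_n - (-T)(z_n)$) is uo-null. For weak nullity, note that $T$ is bounded and therefore weakly continuous, so both $T(x_n)$ and $-T(x_n)$ are weakly null; hence $y_n, z_n \overset{w}{\rightarrow} 0$. For the residuals, a direct computation gives
\begin{equation*}
y_n - T(y_n) = x_n + T(x_n) - T(x_n) - T^2(x_n) = x_n - T^2(x_n),
\end{equation*}
\begin{equation*}
z_n - (-T)(z_n) = x_n - T(x_n) + T(x_n) - T^2(x_n) = x_n - T^2(x_n),
\end{equation*}
so both residuals equal $x_n - T^2(x_n)$, which by assumption is uo-null.

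Applying the uo-demi Dunford--Pettis hypothesis on $T$ to $(y_n)$ yields $\|y_n\| \to 0$, and applying it on $-T$ to $(z_n)$ yields $\|z_n\| \to 0$. Since $x_n = \tfrac{1}{2}(y_n + z_n)$, the triangle inequality gives $\|x_n\| \to 0$, which is exactly what we need.

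I do not anticipate a serious obstacle: the argument is purely algebraic once one notices that both $y_n - T(y_n)$ and $z_n + T(z_n)$ collapse to the same expression $x_n - T^2(x_n)$. No use of uo-convergence beyond the hypothesis itself is required, so no order-continuity or atomicity assumption on $E$ needs to be invoked, and the proof works on any Banach lattice.
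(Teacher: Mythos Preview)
Your proof is correct. Both arguments rest on the same algebraic factorization $I-T^2=(I-T)(I+T)=(I+T)(I-T)$, but they finish differently. You introduce two auxiliary sequences $y_n=x_n+T(x_n)$ and $z_n=x_n-T(x_n)$, apply the uo-demi Dunford--Pettis hypothesis for $T$ and $-T$ symmetrically to obtain $\|y_n\|,\|z_n\|\to 0$, and then recover $x_n=\tfrac12(y_n+z_n)$. The paper defines only $z_n=x_n-T(x_n)$, uses the uo-demi Dunford--Pettis property of $-T$ to get $\|z_n\|=\|x_n-T(x_n)\|\to 0$, and then, rather than introducing $y_n$, invokes the earlier result (Proposition~\ref{11}) that every uo-demi Dunford--Pettis operator is demi Dunford--Pettis; since $x_n\overset{w}{\to}0$ and $\|x_n-T(x_n)\|\to 0$, the demi Dunford--Pettis property of $T$ gives $\|x_n\|\to 0$. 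Your version is fully self-contained and avoids the appeal to that proposition, at the modest cost of tracking a second auxiliary sequence; the paper's version is slightly more economical in bookkeeping but depends on the prior implication uo-demi~DP $\Rightarrow$ demi~DP.
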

	
	\begin{proof}
		To show that  $T^2$ is uo-demi Dunford-Pettis operator, let $x_n{\overset{w}{\rightarrow}} 0$  and $x_{n}-T^2(x_{n}) {\overset{uo}{\rightarrow}} 0$. we have the following
		\begin{align*}
			x_{n}-T(x_{n}) &= x_{n}-T^2(x_{n})+T^2(x_{n})-T(x_{n}) \\
			&= x_{n}-T^2(x_{n})-T[x_{n}-T(x_{n})] \\
			\intertext{we set $z_n=	x_{n}-T(x_{n})$, clearly $z_n{\overset{w}{\rightarrow}} 0$. On the other hand we have that}
			z_{n}+T(z_{n})=z_{n}-[-T(z_{n})]	&= x_{n}-T^2(x_{n}){\overset{uo}{\rightarrow}} 0
		\end{align*}
		By hypothesis $-T$ is  uo-demi Dunford-Pettis, therefore 	 $\|z_n\|=\|x_{n}-T(x_{n})\|\rightarrow 0$. Now, as $T$ is  uo-demi Dunford-Pettis, Proposition \ref{11} shows that $T$ is demi Dunford-Pettis, hence $\|x_{n}\|\rightarrow 0$, which proves that $T^2$ is uo-demi Dunford-Pettis.
		
	\end{proof}
	
	\begin{proposition}
		Let $E$ be a Banach lattice, and $T:E\longrightarrow E$ a  $\sigma$-weak-uo continuous operator. If $T$ is uo-demi Dunford-Pettis operator, then its power $T^m$ for all $m\in\mathbb{N}^*$ is uo-demi Dunford-Pettis.
	\end{proposition}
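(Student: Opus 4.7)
The plan is to reduce the hypothesis that $x_n - T^m(x_n) \xrightarrow{uo} 0$ to the hypothesis that $x_n - T(x_n) \xrightarrow{uo} 0$, so that the uo-demi Dunford--Pettis assumption on $T$ itself can be applied. The key observation is that once $T$ is $\sigma$-weak-uo continuous, \emph{every} iterate $T^i$ automatically sends weakly null sequences to uo-null sequences.

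More precisely, let $(x_n)$ be weakly null with $x_n - T^m(x_n) \xrightarrow{uo} 0$. Since $T$ is a bounded operator it is weak-to-weak continuous, so by an obvious induction each $T^i(x_n)$ remains weakly null for $i = 0,1,\dots,m-1$. Then, applying $\sigma$-weak-uo continuity of $T$ to each of these weakly null sequences, we get
\begin{equation*}
T^{i+1}(x_n) = T\bigl(T^i(x_n)\bigr) \xrightarrow{uo} 0 \qquad (i = 0, 1, \dots, m-1).
\end{equation*}
In particular $T^m(x_n) \xrightarrow{uo} 0$ and $T(x_n) \xrightarrow{uo} 0$.

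From $x_n - T^m(x_n) \xrightarrow{uo} 0$ and $T^m(x_n) \xrightarrow{uo} 0$, linearity of uo-convergence gives $x_n \xrightarrow{uo} 0$. Combining this with $T(x_n) \xrightarrow{uo} 0$ yields $x_n - T(x_n) \xrightarrow{uo} 0$. Since $T$ is uo-demi Dunford--Pettis, we conclude $\|x_n\| \to 0$, which is exactly what is needed to show that $T^m$ is uo-demi Dunford--Pettis. There is no real obstacle here; the only point to verify carefully is the elementary fact that bounded operators preserve weak null sequences, which is what allows the $\sigma$-weak-uo continuity hypothesis to be iterated. Alternatively, one could simply invoke Proposition~\ref{composition} inductively, writing $T^m = T \circ T^{m-1}$ with both factors $\sigma$-weak-uo continuous, but the direct argument above is shorter.
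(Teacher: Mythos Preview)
Your proof is correct. The paper's own proof is a one-liner: it simply invokes statement~(1) of Proposition~\ref{composition}, taking $S=T$ there (which is $\sigma$-weak-uo continuous and uo-demi Dunford--Pettis) and the second operator to be $T^{m-1}$, so that $T^m=T\circ T^{m-1}$ is uo-demi Dunford--Pettis in one step. Your direct argument is exactly the unwrapped version of that same idea --- you even mention Proposition~\ref{composition} as an alternative --- so the two approaches coincide in substance. One small remark: in your alternative you speak of invoking Proposition~\ref{composition} \emph{inductively} with both factors $\sigma$-weak-uo continuous, but in fact statement~(1) requires only the left factor $S$ to be $\sigma$-weak-uo continuous, so a single application with $S=T$ and the other factor equal to $T^{m-1}$ already suffices, no induction needed.
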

	
	\begin{proof}
		Follows from statement $(1)$ of Proposition \ref{composition}.
		
	\end{proof}

	
	\section{Dominitaion problem} \label{sec7}

	We should note that the domination problem is not valid in the class of uo-demi Dunford-Pettis operators, as the following example shows.
	\begin{example}
		Let $ Id_{L^{1}[0.1]}:L^{1}[0.1]\longrightarrow L^{1}[0.1] $ be the identity operator of $L^{1}[0.1]$. We can see that  $0\leq Id_{L^{1}[0.1]}\leq 2 Id_{L^{1}[0.1]}$. Now, by using Proposition \ref{22}, we have that $2 Id_{L^{1}[0.1]}$ is uo-demi Dunford-Pettis. But, Proposition \ref{SC} asserts that $Id_{L^{1}[0.1]}$ is not uo-demi Dunford-Pettis.
	\end{example}
	
	By the following result, we show that the domination problem  is satisfied in the setting of central uo-demi Dunford-Pettis operators. For this aim, we need to recall from \cite{ABRA} that an operator  $T:E\longrightarrow E$ on a Banach lattice $E$ is called  central if it is dominated by a multiple of the identity. That is, $T$ is central operator if and only if there exists some scalar, $\lambda>0$ such that $ |T(x)| \leq \lambda |x|$ holds for all $x\in E$.

	\begin{proposition} \label{dom}
		Let E be a Banach lattice and $S$, $T:E\longrightarrow E$  be a positive
		operators such that, $0 \leq S \leq T \leq Id_{E}$. If  $T$ is uo-demi Dunford-Pettis
		operator, then $S$ is also uo-demi Dunford-Pettis operator, where $Id_{E}$
		denotes the identity operator.
		
	\end{proposition}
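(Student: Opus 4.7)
The plan is to take a weakly null sequence $(x_n)$ with $x_n - S(x_n) \xrightarrow{uo} 0$ and deduce that $x_n - T(x_n) \xrightarrow{uo} 0$ as well; then the hypothesis that $T$ is uo-demi Dunford--Pettis gives $\|x_n\| \to 0$, and $S$ is uo-demi Dunford--Pettis.

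To bridge from $S$ to $T$, I would first exploit positivity of $S$ together with the triangle inequality to obtain the key pointwise estimate $|x_n| - S(|x_n|) \le |x_n - S(x_n)|$. Indeed, $|x_n| \le |x_n - S(x_n)| + |S(x_n)| \le |x_n - S(x_n)| + S(|x_n|)$, which rearranges to the claim. Since $0 \le |x_n| - S(|x_n|) \le |x_n - S(x_n)|$ and $x_n - S(x_n) \xrightarrow{uo} 0$, the solidity of uo-convergence yields $|x_n| - S(|x_n|) \xrightarrow{uo} 0$.

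Next I would use the domination $0 \le S \le T \le Id_E$ to control $T - S$. Since $T - S$ is a positive operator and $T - S \le Id_E - S$, for each $n$ we have
\[
|(T - S)(x_n)| \le (T - S)(|x_n|) \le (Id_E - S)(|x_n|) = |x_n| - S(|x_n|).
\]
Combined with the previous step, this gives $(T - S)(x_n) \xrightarrow{uo} 0$. Writing $x_n - T(x_n) = (x_n - S(x_n)) - (T - S)(x_n)$ and using that uo-convergence is linear, we conclude $x_n - T(x_n) \xrightarrow{uo} 0$. Since $(x_n)$ is weakly null and $T$ is uo-demi Dunford--Pettis, $\|x_n\| \to 0$, which is exactly what we needed.

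The main obstacle, and the only nonobvious step, is producing the inequality $|x_n| - S(|x_n|) \le |x_n - S(x_n)|$; everything afterwards is bookkeeping with the positivity and domination hypotheses plus the solidity of uo-convergence. Note that the argument uses $0 \le S$ and $S \le T \le Id_E$ in an essential way — positivity of $S$ for the triangle inequality trick, and $T \le Id_E$ to compare $T - S$ with $Id_E - S$.
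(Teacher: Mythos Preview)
Your proof is correct and follows the same overall strategy as the paper: from $x_n - S(x_n)\xrightarrow{uo}0$ deduce $x_n - T(x_n)\xrightarrow{uo}0$ via the intermediate quantity $(Id_E-S)(|x_n|)$, then invoke the hypothesis on $T$. The one substantive difference is how the key inequality $(Id_E-S)(|x_n|)\le |x_n-S(x_n)|$ is obtained. The paper observes that $0\le Id_E-S\le Id_E$ makes $Id_E-S$ a \emph{central} operator and cites Theorem~3.30 of Abramovich--Aliprantis to get the stronger equality $|(Id_E-S)(x_n)|=(Id_E-S)(|x_n|)$; from there $|(Id_E-T)(x_n)|\le (Id_E-T)(|x_n|)\le (Id_E-S)(|x_n|)=|(Id_E-S)(x_n|)$ gives the conclusion in one stroke. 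You instead derive only the inequality you need, directly from the triangle inequality and positivity of $S$, and then split $x_n-T(x_n)=(x_n-S(x_n))-(T-S)(x_n)$, using $T-S\le Id_E-S$ to control the second piece. Your route is more elementary and self-contained (no appeal to the structure theory of orthomorphisms), at the modest cost of one extra line to recombine the pieces via linearity of uo-convergence.
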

	
	\begin{proof}
		Let $x_{n} {\overset{w}{\rightarrow}} 0$ and $x_n-S(x_n){\overset{uo}{\rightarrow}} 0$, we need to show that $\|x_{n}\|\rightarrow 0$. It easily to see that  $|I-S(x)| \leq|x|$  all $x\in E$, hence the operator $I-S$ is central. Using Theorem 3.30 \cite{ABRA}, we obtain 	$|I-S(x_n)|=I-S(|x_n|)$, on the other hand we have; 
		\begin{align*}
			|I-T(x_{n})|\wedge u &\leq I-T(|x_{n}|)\wedge u \\
			& \leq I-S(|x_{n}|)\wedge u \\
			& = |I-S(x_{n})|\wedge u {\overset{o}{\rightarrow}} 0 ,   \text{for all  } u\in E^+
		\end{align*}
		
		Thus, $x_n-T(x_n){\overset{uo}{\rightarrow}} 0$, as $T$ is uo Dunford-Pettis, we infer that $\|x_{n}\|\rightarrow 0$, and so $S$ is uo-demi Dunford-Pettis operator.	
	\end{proof}

	We end by the following important result:
	\begin{proposition}
		Let $E$ be a Banach lattice and $T:E\longrightarrow E$ an uo-demi Dunford-Pettis operator such that $Ker(Id_{E}-T)$ is a Riesz space. Then, $Ker(Id_{E}-T)$ has the positive Schur property when it is considered as a Banach lattice in its own right.
	\end{proposition}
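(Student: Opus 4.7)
The plan is to exploit the defining property of the kernel directly: for every $x \in \mathrm{Ker}(Id_E - T)$ one has $T(x) = x$, so for any sequence $(x_n)$ lying inside this kernel the ``defect'' $x_n - T(x_n)$ vanishes identically, and in particular $x_n - T(x_n) \overset{uo}{\to} 0$ in $E$ trivially.

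To verify the positive Schur property, I would fix an arbitrary positive sequence $(x_n)$ in $\mathrm{Ker}(Id_E - T)$ which converges weakly to $0$ when the kernel is viewed as a Banach lattice in its own right. Since $\mathrm{Ker}(Id_E - T)$ is a norm-closed subspace of $E$, Hahn--Banach allows one to extend every continuous linear functional on the subspace to $E$, and hence weak convergence in the subspace automatically upgrades to $x_n \overset{w}{\to} 0$ in $E$. Combined with the identity $x_n - T(x_n) = 0$ observed above, the hypothesis that $T$ is uo-demi Dunford--Pettis then forces $\|x_n\| \to 0$, which is exactly the positive Schur property for $\mathrm{Ker}(Id_E - T)$.

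I do not anticipate any real obstacle. The positivity of the sequence is not actually used in the argument, so the same reasoning yields the stronger statement that $\mathrm{Ker}(Id_E - T)$ has the full Schur property; the assumption that the kernel is a Riesz space is invoked only to make the phrase ``positive Schur property'' meaningful and to justify calling $\mathrm{Ker}(Id_E - T)$ a Banach lattice in its own right. The only mildly delicate step is the identification of weak convergence in the subspace with the restriction of weak convergence in $E$, which is the standard Hahn--Banach extension argument.
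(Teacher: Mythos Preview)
Your argument is correct and takes a genuinely more direct route than the paper. The paper does not verify the positive Schur property from its definition; instead it invokes a Meyer--Nieberg characterization (positive Schur $\Leftrightarrow$ every relatively weakly compact set is L-weakly compact), picks a disjoint sequence $(x_n)$ in the solid hull of such a set, appeals to Theorem~4.64 of Aliprantis--Burkinshaw to obtain $x_n \overset{w}{\to} 0$, and only then uses $x_n - T(x_n) = 0$ together with the uo-demi Dunford--Pettis hypothesis to conclude $\|x_n\|\to 0$. Your approach bypasses the L-weak compactness machinery and the disjoint-sequence step entirely, and---as you correctly observe---it actually delivers the full Schur property for $\mathrm{Ker}(Id_E-T)$, a strengthening that the paper's detour obscures.

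One small correction: the Hahn--Banach step is stated in the wrong direction. To pass from $x_n \overset{w}{\to} 0$ in the subspace to $x_n \overset{w}{\to} 0$ in $E$, you only need that every $f\in E'$ \emph{restricts} to a continuous functional on $\mathrm{Ker}(Id_E-T)$, which is immediate; Hahn--Banach (extension) would be relevant for the converse implication. This does not affect the validity of your proof.
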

	\begin{proof}
		In view of Corollary \cite{Mey}, it suffices to show that every weakly subset $A$ of $Ker(Id_{E}-T)$ is L-weakly compact. To this, let $A$ be a weakly subset of $Ker(Id_{E}-T)$, and let $(x_n)$ be a disjoint sequence in $Sol(A)$. Then Theorem 4.64 implies $x_{n} {\overset{w}{\rightarrow}} 0$. As $(x_n)\in Sol(A)\subset Ker(Id_{E}-T)$ it follows that $x_n-T(x_n)=0$ for all $n$. Thus, $x_n-T(x_n){\overset{uo}{\longrightarrow}} 0$  all $n$. Now, the fact that $T$ is uo-demi Dunford-Pettis implies that $\|x_{n}\|\rightarrow 0$. This proves that  $A$ is L-weakly compact subset of $Ker(Id_{E}-T)$.
	\end{proof}
	

\end{document}